\newif\ifabstr
\newif\ifcomments 
\newtheorem{theorem}{Theorem}[section]
\newtheorem{lemma}[theorem]{Lemma}
\newtheorem{observation}[theorem]{Observation}
\newtheorem{corollary}[theorem]{Corollary}
\newtheorem{proposition}[theorem]{Proposition}
\newtheorem{question}[theorem]{Question}
\newtheorem{fact}[theorem]{Fact}
\newtheorem{claim}[theorem]{Claim}
\newtheorem{problem}[theorem]{Problem}
\newtheorem{definition}[theorem]{Definition}
\newtheorem{example}[theorem]{Example}
\newtheorem{remark}[theorem]{Remark}
\newtheorem{theorem}{Theorem}[section]
\newtheorem{lemma}[theorem]{Lemma}
\newtheorem{corollary}[theorem]{Corollary}
\newtheorem{proposition}[theorem]{Proposition}
\theoremstyle{definition}
\newtheorem{definition}[theorem]{Definition}
\theoremstyle{remark}
\newtheorem{example}[theorem]{Example}
\newtheorem{remark}[theorem]{Remark}
\newcommand\R{\ensuremath{\mathbb{R}}}
\newcommand\Z{\ensuremath{\mathbb{Z}}}
\newcommand\NN{\ensuremath{\mathcal{N}}}
\newcommand\F{\ensuremath{\mathcal{F}}}
\newcommand\V{\ensuremath{\mathcal{V}}}
\newcommand\K{\ensuremath{\mathcal{K}}}
\newcommand\X{\ensuremath{\mathcal{X}}}
\newcommand\A{\ensuremath{\mathcal{A}}}
\newcommand\B{\ensuremath{\mathcal{B}}}
\renewcommand\L{\ensuremath{\mathcal{L}}}
\newcommand\sect{\ensuremath{\textrm{reg}}}
\newcommand\one{\ensuremath{\mathbf{1}}}
\newcommand{\pth}[1]{\left( #1 \right)}
\newcommand\pthb[1]{\biggl(\,#1\,\biggr)}
\newcommand\Prob{\mathbb P}
\DeclareMathOperator{\avoid}{av}
\DeclareMathOperator{\MNF}{MNF}
\DeclareMathOperator{\At}{\mathbf{At}}
\newcommand{\marrow}{\marginpar{\boldmath$\longleftarrow$}}
\newcommand{\jirka}[1]{\ifhmode\newline\fi\marrow \textsf{*** (JIRKA: ) #1\newline}}
\newcommand{\martin}[1]{\ifhmode\newline\fi\marrow \textsf{*** (MARTIN: ) #1\newline}}
\newcommand{\xavier}[1]{\ifhmode\newline\fi\marrow \textsf{*** (XAVIER: ) #1\newline}}
\newcommand{\pavel}[1]{\ifhmode\newline\fi\marrow \textsf{*** (PAVEL \& ZUZKA: ) #1\newline}}
\newcommand{\jirka}[1]{}
\newcommand{\martin}[1]{}
\newcommand{\xavier}[1]{}
\newcommand{\pavel}[1]{}
\newcommand\coeff{\alpha}
\newcommand\coeffv{\boldsymbol{\alpha}}
\newcommand\xx{\boldsymbol{x}}
\newcommand\yy{\boldsymbol{y}}
\title{Simplifying inclusion - exclusion formulas}
\begin{document}
\renewcommand\dagger{{**}}
\author{
  Xavier Goaoc\thanks{Universit\'e Paris-Est Marne-la-Vall\'ee,
    France.  This research was done while this author was affiliated
    with Inria, Project-team Vegas. A visit of this author in Prague
    was partially supported from Grant GRADR Eurogiga GIG/11/E023.
    E-Mail: \texttt{goaoc@univ-mlv.fr}}
 \and 
Ji\v{r}\'{\i} Matou\v{s}ek\thanks{Department of Applied Mathematics, Charles University,
   Malostransk\'{e} n\'{a}m.  25, 118~00~~Praha~1, Czech Republic, and Institute of  Theoretical Computer Science, ETH Zurich, 8092 Zurich,
 Switzerland. Supported by the ERC Advanced Grant No.~267165. Partially supported by the Charles University Grant GAUK 421511 and 
by Grant GRADR Eurogiga GIG/11/E023. E-mail: \texttt{matousek@kam.mff.cuni.cz}}
\and Pavel Pat\'ak\thanks{Department of Algebra, Charles University, Sokolovsk\'a 83, 186~75~Praha~8, Czech Republic.
Partially supported by the Charles University Grant GAUK 421511 and SVV-2012-265317. E-Mail:
\texttt{patak@kam.mff.cuni.cz}}\and
Zuzana Safernov\'a\thanks{Department of Applied Mathematics, Charles University,
   Malostransk\'{e} n\'{a}m.  25, 118~00~~Praha~1, Czech Republic. Supported by the ERC Advanced Grant No.~267165.
Partially supported by the Charles University Grant GAUK 421511. E-Mail: \texttt{\{zuzka, tancer\}@kam.mff.cuni.cz}}
\and Martin Tancer$^\S$}
\date{}
\else 
\usepackage{authblk} 
 \author[1,a]{Xavier Goaoc}
 \author[2,3,b,c,e]{Ji\v{r}\'{\i} Matou\v{s}ek}
 \author[4,c,d]{Pavel Pat\'ak}
 \author[2,b,c]{Zuzana Safernov\'a}
 \author[2,b,c]{Martin Tancer}
 
 \affil[1]{Universit\'e Paris-Est Marne-la-Vall\'ee}
 \affil[2]{Department of Applied Mathematics, Charles University,
   Malostransk\'{e} n\'{a}m.  25, 118~00~~Praha~1, Czech Republic.}
 \affil[3]{Institute of Theoretical Computer Science, ETH Zurich, 8092
   Zurich, Switzerland}
 \affil[4]{Department of Algebra, Charles University, Sokolovsk\'a 83,
   186~75~Praha~8, Czech Republic}
 \affil[a]{This research was done while this author was affiliated
   with Inria, Project-team Vegas. A visit of Xavier Goaoc in Prague
   was partially supported from Grant GRADR Eurogiga GIG/11/E023.}
 \affil[b]{Supported by the ERC Advanced Grant No.~267165.}
 \affil[c]{Partially supported by the Charles University Grant GAUK
   421511. } \affil[d]{Partially supported by the Charles University
   Grant SVV-2012-265317.}  \affil[e]{Partially supported by Grant
   GRADR Eurogiga GIG/11/E023.}
 \affil[*]{goaoc@univ-mlv.fr,\{matousek,patak,zuzka,tancer\}@kam.mff.cuni.cz}
\begin{document}
\date{\today}
\fi 


\maketitle

{
  \paragraph{Abstract.}
  Let $\F=\{F_1,F_2, \ldots,F_n\}$ be a family of $n$ sets on a ground
  set $S$, such as a family of balls in $\R^d$. For every finite
  measure $\mu$ on $S$, such that the sets of $\F$ are measurable, the
  classical \emph{inclusion-exclusion formula} asserts that
  $\mu(F_1\cup F_2\cup\cdots\cup F_n)=\sum_{I:\emptyset\ne
    I\subseteq[n]} (-1)^{|I|+1}\mu\Bigl(\bigcap_{i\in I} F_i\Bigr)$;
  that is, the measure of the union is expressed using measures of
  various intersections.  The number of terms in this formula is
  exponential in $n$, and a significant amount of research,
  originating in applied areas, has been devoted to constructing
  simpler formulas for particular families~$\F$.  We provide an upper
  bound valid for an arbitrary $\F$: we show that every system $\F$ of
  $n$ sets with $m$ nonempty fields in the Venn diagram admits an
  inclusion-exclusion formula with $m^{O(\log^2n)}$ terms and with
  $\pm1$ coefficients, and that such a formula can be computed in
  $m^{O(\log^2n)}$ expected time.  For every $\varepsilon>0$ 
  we also construct systems with Venn diagram of size $m$ for 
  which every valid inclusion-exclusion formula
  has the sum of absolute values of the coefficients at least $\Omega(m^{2-\varepsilon})$.}

\section{Introduction}

One of the basic topics in introductory courses of discrete
mathematics is the \emph{inclusion-exclusion principle} (also called
the \emph{sieve formula}), which allows one to compute the number of
elements of a union $F_1\cup F_2\cup\cdots\cup F_n$ of $n$ sets from
the knowledge of the sizes of all intersections of the $F_i$'s.

We will consider a slightly more general setting, where we have a
ground set $S$ and a (finite) \emph{measure} $\mu$
on $S$; then the inclusion-exclusion principle asserts that for every
collection $F_1,F_2,\ldots,F_n$ of $\mu$-measurable sets, we have
\begin{equation}\label{eq:fullie}
  \mu\biggl(\bigcup_{i=1}^n F_i\biggr) = \sum_{I:\emptyset \neq I \subseteq
    [n]}(-1)^{|I|+1}\mu\biggl(\bigcap_{i \in I}F_i\biggr).
\end{equation}
(Here, as usual, $[n]=\{1,2,\ldots, n\}$ and $|I|$ denotes the
cardinality of the set $I$.) This principle not only plays a
fundamental role in various areas of mathematics such as probability
theory or combinatorics, but it also has important algorithmic
applications\ifabstr, \else. 
For instance, it provides simple methods for the
computation of volume or surface area of molecules in computational
biology~\cite{Mseed} and underlies, through efficient computation of
M\"obius transforms~\cite[Section 4.3.4]{aocp}, \fi \ifabstr e.g.,  \else\fi the best known
algorithms for several NP-hard problems including graph
$k$-coloring~\cite{set-partitioning}, travelling salesman problem on
bounded-degree graphs~\cite{travelling-salesman}, dominating
set~\cite{dominating-sets}, or partial dominating set and set
splitting~\cite{partial-dominating}.

\ifabstr \smallskip
\else
\medskip
\fi
The inclusion-exclusion principle involves a number of summands that
is exponential in $n$, the number of sets. In general this cannot be
avoided if one wants an \emph{exact} formula valid for \emph{every}
family $\F=\{F_1,F_2, \ldots,F_n\}$; see Example~\ref{ex:uniqueness} below for a
family for which Equation~\eqref{eq:fullie} is the only solution.
Yet, since this is a serious obstacle to efficient uses of
inclusion-exclusion, much effort has been devoted to finding
``smaller'' formulas. These efforts essentially organize along two
lines of research.

The first approach gives up on exactness and tries to
\emph{approximate} efficiently the measure of the union using the
measure of only \emph{some} of the intersections.
\ifabstr
See, e.g.,  
\else
The first results
of this flavor are the 
\fi
classical \emph{Bonferroni
  inequalities}~\cite{Bonferroni}\ifabstr.
We give a short overview of this line in the full version of this 
paper~\cite{ie_arxiv}.
\else.\footnote{These assert that if we
  omit all terms with $|I|>r$ on the right-hand side of
  (\ref{eq:fullie}), then we get an upper bound for the left-hand side
  for $r$ odd, and a lower bound for the left-hand side for $r$ even.
  The case $r=1$ is the often-used \emph{union bound} in probability
  theory.}
  It turns out that better approximations can be obtained by
replacing the coefficients $(-1)^{|I|+1}$ by other suitable numbers,
and such Bonferroni-type inequalities have been studied extensively;
see, e.g.,~\cite{Galambos}. Linial and Nisan~\cite{LinialNisan} and
Kahn et al.~\cite{KahnLinialSamorodnitsky} investigated how well
$\mu(F_1\cup\cdots\cup F_n)$ can be approximated if we know the
measure of all intersections $\bigcap _{i\in I} F_i$ for all
$I\subseteq [n]$ of size at most $r$.  Their main finding is that
having $r$ at least of order $\sqrt n$ is both necessary and
sufficient for a reasonable approximation in the worst case.  This
still leaves us with about $2^{\sqrt n}$ terms in approximate
inclusion-exclusion formulas.
\fi

The second line of research looks for ``small'' inclusion-exclusion
formulas valid for \emph{specific} families of sets. To illustrate the
type of simplifications afforded by fixing the sets, consider the
family $\F=\{F_1,F_2,F_3\}$ of
Figure~\ref{fig:example-simplification}.  Since $F_1\cap F_3=F_1\cap
F_2 \cap F_3$, Formula~\eqref{eq:fullie} can be simplified to
\ifabstr
\[\mu\pth{F_1 \cup F_2 \cup F_3}
=\mu(F_1)+\mu(F_2)+\mu(F_3)-\mu(F_1\cap F_2)-\mu(F_2 \cap F_3).\]
\vspace{-0.4cm}
\else
\[\mu\pth{F_1 \cup F_2 \cup F_3}
=\mu(F_1)+\mu(F_2)+\mu(F_3)-\mu(F_1\cap F_2)-\mu(F_2 \cap F_3).\]
\fi
\begin{figure}
\begin{center}
\includegraphics{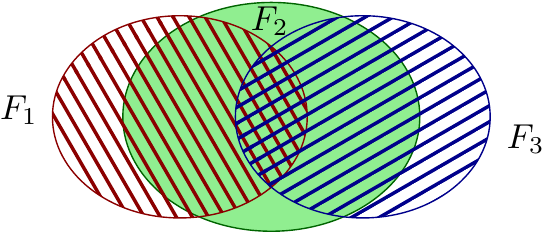}
\caption{Three subsets of $\R^2$ admitting a simpler
  inclusion-exclusion formula\label{fig:example-simplification}. The ground
set $F_1 \cup F_2 \cup F_3$ splits into six nonempty regions recognizable
by the filling pattern.}
\end{center}
\ifabstr
\vspace{-0.5cm}
\else
\fi
\end{figure}

More generally, let us consider a family $\F=\{F_1,F_2,\ldots,F_n\}$,
and let us say that a coefficient vector
\ifabstr
\[
\coeffv=(\coeff_I)_{\emptyset\ne I\subseteq [n]}\in\R^{2^n-1}
\]
\vspace{-0.6cm}
\else
\[
\coeffv=(\coeff_I)_{\emptyset\ne I\subseteq [n]}\in\R^{2^n-1}
\]
\fi
is an \emph{IE-vector for $\F$} if we have
\begin{equation}\label{e:valid-ie}
  \mu\biggl(\bigcup_{i=1}^n F_i\biggr) = \sum_{I\colon \emptyset \neq I \subseteq [n]}
  \coeff_I\mu\biggl(\bigcap_{i \in I}F_i\biggr)
\end{equation}
for every finite measure $\mu$ on the ground set of $\F$ (with all the
$F_i$'s measurable). Given $\F$, we would like to find an IE-vector
for $\F$, such that both the number of nonzero coefficients is small,
and the coefficients themselves are not too large. \ifabstr We learned this idea from from~\cite{AttaliEdelsbrunner} and refer to the monograph
of Dohmen~\cite{Dohmen} for an overview of this line of research. \else
This idea, which we
originally learned from~\cite{AttaliEdelsbrunner}, seems to originate
in the work of Kratky~\cite{Kratky} on families of disks in the plane,
and a systematic study of such simplifications was initiated by Naiman
and Wynn~\cite{NaimanWynn92,NaimanWynn97}. A simplified inclusion-exclusion
formula was also successfully used in an algorithm
of~Bj\"{o}rklund et al.~\cite{travelling-salesman}. We refer to the monograph
of Dohmen~\cite{Dohmen} for an overview of this line of research. 
\fi

Given a specific family $\F=\{F_1,F_2,\ldots,F_n\}$ of sets, how small
can we expect an inclusion-exclusion formula to be? 
\ifabstr \else This is, roughly
speaking, the question we tackle in this paper. \fi To formalize the
problem, we should specify how $\F$ is given. Let us
consider the \emph{Venn diagram} of $\F$\ifabstr. \else, which is the partition of
the ground set $S$ into equivalence classes according to the
membership in the sets of $\F$. \fi For each nonempty index set
$\tau\subseteq [n]$, we define the \emph{region} of $\tau$, denoted by
$\sect(\tau)$, as the set of all points that belong to the sets $F_i$
with $i\in \tau$ and no others (see
Figure~\ref{fig:example-simplification});
\[
\sect(\tau) = \pthb{\bigcap_{i\in \tau}F_i} \setminus \pthb{\bigcup_{i\not\in
\tau}F_i}.
\]

The \emph{Venn diagram} of $\F$ is then the collection of all subsets of
$[n]$ with non-empty regions; that is,
\[\V =\V(\F):= \{\tau \subseteq [n] \colon \sect(\tau) \neq \emptyset \}.\]
We regard the Venn diagram as a set system on
the ground set $[n]$; it is a ``dual'' of the set system $\F$.

\ifabstr As
far as inclusion-exclusion formulas are
concerned, all points in a single region are equivalent; it only
matters which of the regions are nonempty.  Thus, in order to simplify
our formulations, we can assume that $\F$ is \emph{standardized},
meaning that the ground set equals the union of the $F_i$'s and each
nonempty region has exactly one point.  From an algorithmic point of
view, this amounts to a preprocessing step for $\F$, in which the part
of the ground set $S$ in each nonempty region is contracted to a
single point.
\else
We say that $\F$ is \emph{standardized} if the ground set equals the union of
the $F_i$'s and each nonempty region has exactly one point.
It is easy to see that, as far as inclusion-exclusion formulas are
concerned, all points in a single region are equivalent; it only
matters which of the regions are nonempty. Therefore assuming that $\F$ is
standardized does not mean a loss of generality. We will use this assumption
in the algorithmic part of our main result---Theorem~\ref{thm:bound}.
For general $\F$ this requires a preprocessing step for $\F$, in which the part
of the ground set $S$ in each nonempty region is contracted to a
single point.
\fi

Let $\F=\{F_1,F_2,\ldots,F_n\}$ be a family of sets and let $m$ denote
the size of $\V$ (which equals the size of the ground set for $\F$
standardized). 
\ifabstr
A linear-algebraic argument shows that \emph{every} (finite) family
$\F$ has an inclusion-exclusion formula with at most $m$ terms (see
Corollary~\ref{cor:linear}).
\else
A linear-algebraic argument shows that \emph{every} (finite) family
$\F$ has an inclusion-exclusion formula with at most $m$ terms (see
Corollary~\ref{cor:linear}) and $m$ terms are sometimes necessary (see
the beginning of Section~\ref{sec:lower}). The question of how small a
formula $\F$ admits may thus seem settled.
\fi
\ifabstr
However, the coefficients (see Example~\ref{ex:exponential}) might be exponentially large.
This requires computing the measure of the intersections with enormous precision
to obtain meaningful results. 
\else
There is, however, a
caveat: this linear-algebraic argument may yield \emph{exponentially
  large} coefficients (see Example~\ref{ex:exponential}). If we wanted
to use such a formula, we would need to compute with very high
precision, and perhaps more seriously, we would have to know the
measures of the various intersections with an enormous precision, in
order to obtain a meaningful result. This may be totally impractical,
e.g., in geometric settings where some physical measurements are
involved, or where the measures of the intersections are computed with
limited precision.\fi  Thus, we prefer inclusion-exclusion formulas where
not only the number of terms is small, but the coefficients are also
small.

Our main result is the following general upper bound; to our
knowledge, it is the first upper bound applicable for an arbitrary
family.

\begin{theorem}\label{thm:bound}
  Let $n$ and $m$ be integers and let $D=\lceil 2e \ln m\rceil \lceil
  2+\ln \frac{n}{\ln m} \rceil$. Then for every family $\F$ of $n$
  sets with Venn diagram of size $m$, there is an IE-vector $\coeffv$
  for $\F$ that has at most $\sum_{i=1}^D {\binom{n}{i}}\le m^{O(\ln^2
    n)}$ nonzero coefficients, and in which all nonzero coefficients
  are $\pm1$'s. Such an $\coeffv$ can be computed in $ m^{O(\ln^2 n)}$
  expected time if $\F$ is standardized.
\end{theorem}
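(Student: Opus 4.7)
By plugging into \eqref{e:valid-ie} the point mass supported on each region $\sect(\tau)$ of a standardized $\F$, the condition that $\coeffv$ is an IE-vector becomes the purely combinatorial requirement
\[
\hat{\coeffv}(\tau):=\sum_{\emptyset\ne I\subseteq\tau}\coeff_I = 1
\qquad\text{for every }\tau\in\V.
\]
Our task is therefore to produce $\coeffv\in\{-1,0,+1\}^{2^n-1}$ whose support is confined to subsets of $[n]$ of size at most $D$ and whose zeta-transform equals $1$ on $\V$.

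\textbf{Iterative halving.} The plan is to assemble $\coeffv$ as a sum of $T=\lceil 2e\ln m\rceil$ corrections $\coeffv^{(1)}+\cdots+\coeffv^{(T)}$, each entry of each $\coeffv^{(j)}$ lying in $\{-1,0,+1\}$, each supported only on monomials of degree at most $d=\lceil 2+\ln (n/\ln m)\rceil$, and with the supports of distinct rounds made pairwise disjoint (for example, by reserving different sizes of monomials for different rounds) so that the final $\coeffv$ stays $\pm 1$-valued. Call $\tau\in\V$ \emph{bad} at the start of round $j$ if $\hat{\coeffv}(\tau)\ne 1$ for the partial $\coeffv$ built so far. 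The goal is to pick $\coeffv^{(j)}$ so that the bad set shrinks by at least a factor of two. After $T$ rounds, the number of bad regions is strictly less than $m\cdot 2^{-T}<1$, hence zero, and the resulting $\coeffv$ is supported on sets of size at most $Td=D$, matching the $\sum_{i=1}^{D}\binom{n}{i}$ bound in the statement.

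\textbf{The key per-round lemma.} The heart of the argument is the existence, at each round, of a correction that halves the bad set. We prove this probabilistically: sample $\coeffv^{(j)}$ from a suitable distribution on $\{-1,0,+1\}$-valued vectors of support $\binom{[n]}{\le d}$ (a natural candidate is to choose a random hash $\pi\colon[n]\to[ed]$ and take $\coeff^{(j)}_I=(-1)^{|I|+1}$ whenever $\pi$ is injective on $I$, or some variant that forces disjointness across rounds) and show by linearity of expectation that for every fixed bad $\tau$ the probability that $\hat{\coeffv^{(j)}}(\tau)$ equals the required correction $1-\hat{\coeffv}(\tau)$ is at least $1/2$. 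The parameters $d=\Theta(\ln(n/\ln m))$ and $ed=\Theta(d)$ are calibrated precisely so that this one-sided success probability holds uniformly in $|\tau|$.

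\textbf{Algorithm and the main obstacle.} Given the per-round lemma, the algorithm simply samples $\coeffv^{(j)}$ repeatedly until at least half of the bad regions are corrected; for standardized $\F$ one can evaluate $\hat{\coeffv}(\tau)$ in time polynomial in the current support and in $m$, so the expected total running time is $m^{O(\ln^2 n)}$. The technical crux is the per-round lemma itself: identifying a random distribution that simultaneously (i) outputs $\pm 1$-valued vectors compatible with the disjoint-support requirement across rounds, (ii) stays within the degree-$d$ budget, and (iii) achieves the uniform $1/2$ success probability for every bad region regardless of its size. Tuning all three constraints together is what dictates the precise form of $D$ in the theorem.
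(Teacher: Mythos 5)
Your reformulation at the start is correct and matches Lemma~\ref{lem:system}. But from there the proposal diverges sharply from the paper, and the divergence creates a genuine gap rather than an alternative proof.

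The paper's argument is one-shot and structural. It takes a random permutation $\rho$ of $[n]$, uses it to define a \emph{selector} $w_\rho(\tau)=\min_\prec\tau$, and assembles the simplicial complex $\K_\rho$; Lemma~\ref{lem:Kr} then says $(\F,\K_\rho)$ is an abstract tube, so the vector $x_I=(-1)^{|I|+1}\mathbf{1}[I\in\K_\rho]$ is automatically a $\pm1$ IE-vector with \emph{no} post-hoc correction. The entire probabilistic work is then concentrated in Lemmas~\ref{lem:compat}--\ref{l:estimate}: a large simplex of $\K_\rho$ forces a staircase pattern in the permuted incidence matrix $\Gamma_\rho$, and a careful conditioning argument (the events $E_\ell$ and the elementary inequality of Lemma~\ref{c:ineq}) shows that a random $\rho$ produces this staircase with probability $\le 1/2$ once $k\ge D$. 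That is what controls the degree and hence the number of terms.

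Your proposal instead tries to build the IE-vector as a sum of corrections that iteratively halve the set of bad regions. Several things break. First, the ``per-round lemma'' is simply asserted, not proved; it is not a routine linearity-of-expectation computation, because the quantity $\hat{\coeffv^{(j)}}(\tau)$ for your random-hash vector is $\chi(\K_\pi[\tau])$, which for a uniformly random hash concentrates nowhere near a prescribed integer target when $|\tau|$ is large (a hash into $ed$ colors cannot even have a singleton color class inside a region with $|\tau|\gg ed$, so the probability of hitting the target collapses to zero for large regions, not $\ge1/2$). Second, even granting the per-round lemma for the regions you call bad, you never ensure that $\hat{\coeffv^{(j)}}(\tau)=0$ on the currently \emph{good} regions; a global correction can and generically will flip good regions to bad, so ``the bad set halves'' does not follow from ``each bad region is fixed with probability $\ge 1/2$.'' Third, your parameters are internally inconsistent: you require each round's support to have degree $\le d$ and the rounds' supports to be size-disjoint, which caps the number of rounds at $d$ and the total degree at $d$, yet you then claim the final support has degree $Td$. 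Fixing this (e.g., letting round $j$ use degrees in $((j-1)d,jd]$) reintroduces the problem that a single round must correct regions of all sizes using only a thin band of degrees, which again is not addressed.

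In short, the missing idea is the abstract-tube/selector mechanism (Proposition~\ref{prop:sufficient} and Lemma~\ref{lem:Kr}), which makes the $\pm1$ coefficients and the validity of the formula automatic from a single random object, so that all of the analytic effort can go into bounding the size of one random complex. Without a substitute for that mechanism, the iterative scheme has no valid-per-round guarantee and no monotonicity, so the induction never gets off the ground.
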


The bound in this theorem is quasi-polynomial, but not polynomial, in
$m$ and $n$. We do not know if a polynomial bound can be achieved
 with $\pm1$ coefficients.  We have at least the following
lower bound, \ifabstr presented \else proved \fi in Section~\ref{sec:lower}, showing that
inclusion-exclusion formulas of \emph{linear} size are impossible in
general.

\begin{theorem}\label{t:lwb} 
  For any $\varepsilon>0$, for arbitrarily large values of $m$, there
  exists a family of sets with Venn diagram of size $m$ for which any
  IE-vector has $\ell_1$-norm at least
  $\Omega\left(m^{2-\varepsilon}\right)$.
\end{theorem}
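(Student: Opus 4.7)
My plan is to use LP duality. Testing~\eqref{e:valid-ie} against the unit point masses at each region shows that $\coeffv$ is an IE-vector iff $\sum_{\emptyset\ne I\subseteq\tau}\coeff_I=1$ for every $\tau\in\V$. Writing this as a linear system $A\coeffv=\one$ with $A$ the $|\V|\times(2^n-1)$ subset-containment matrix ($A_{\tau,I}=[I\subseteq\tau]$), standard duality for $\ell_1$-minimization yields
\[
\min\bigl\{\|\coeffv\|_1 : A\coeffv=\one\bigr\} = \max\Bigl\{\textstyle\sum_{\tau\in\V}z_\tau : z\in\R^\V,\ \bigl|\textstyle\sum_{\tau\in\V,\,\tau\supseteq I}z_\tau\bigr|\le 1\ \forall\emptyset\ne I\subseteq[n]\Bigr\}.
\]
So the theorem reduces to exhibiting, for each $\varepsilon>0$ and arbitrarily large $m$, a family $\F$ with Venn diagram of size $m$ together with a signed weight $z:\V\to\R$ satisfying the ``shadow'' constraints and $\sum_{\tau\in\V}z_\tau\ge\Omega(m^{2-\varepsilon})$.

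A universal upper bound $\sum_\tau z_\tau\le 2^n-1$ follows from the relation $z_\emptyset=0$ via Möbius inversion on $2^{[n]}$: setting $f(I)=\sum_{\tau\supseteq I}z_\tau$, one obtains $\sum_\tau z_\tau=f(\emptyset)=\sum_{I\ne\emptyset}(-1)^{|I|+1}f(I)$, so $|f(\emptyset)|\le 2^n-1$. Matching this with $m^{2-\varepsilon}$ forces $n=\lceil(2-\varepsilon)\log_2 m\rceil$, so that $\V$ is a very sparse subset of $2^{[n]}$ of density $\approx m^{\varepsilon-1}$. The unique extremal certificate for the complete case $\V=2^{[n]}\setminus\{\emptyset\}$ is the M\"obius-type vector $z^*_\tau=(-1)^{|\tau|+1}2^{n-|\tau|}$, which hits the bound exactly. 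My plan is to construct an \emph{asymmetric} $\V$ of size $m$ together with a signed weight $z$ closely related to the restriction $z^*|_\V$, so that all $2^n-1$ shadows of $z$ remain bounded by $1$ while $\sum_\tau z_\tau=\Omega(2^n)=\Omega(m^{2-\varepsilon})$. Asymmetry is essential: a symmetric $\V$ such as $\binom{[n]}{\le k}$ admits, by $S_n$-averaging, the truncated standard inclusion-exclusion formula $\coeff_I=(-1)^{|I|+1}$ as an IE-vector, giving $\ell_1=m$ only. The concrete construction will likely be either probabilistic (a random sparse $\V$ tuned so that shadow sums concentrate around zero) or explicit (a family derived from a balanced incomplete block design or a suitably structured linear code).

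The main obstacle is controlling all $2^n-1$ shadow constraints simultaneously. Since $|z^*_\tau|$ varies from $1$ (at $\tau=[n]$) to $2^{n-1}$ (at singletons), naive truncation or random restriction produces shadows with wildly varying variances; balancing them so that they stay uniformly bounded after rescaling, while still leaving $\sum_\tau z_\tau$ of order $2^n$, requires delicate cancellation analysis and a careful union bound over the exponentially many shadow constraints. Ruling out ``clever'' alternative IE-vectors with smaller $\ell_1$---i.e.\ certifying that the constructed $z$ is nearly optimal in the dual LP---is the technical heart of the proof.
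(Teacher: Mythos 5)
Your LP-duality framing is a sound reformulation: the dual of the $\ell_1$-minimization problem is indeed the ``shadow-bounded'' maximization you wrote, and the observation that $\sum_\tau z_\tau \le 2^n-1$ follows from $z_\emptyset=0$ via M\"obius inversion is correct. The observation that symmetric choices of $\V$ (such as $\binom{[n]}{\le k}$) only give $\ell_1=m$ is also a sensible consistency check. But your proposal stops precisely where the hard work starts: you do not produce an explicit family $\F$, you only say the construction ``will likely be either probabilistic or explicit (a BIBD or a linear code).'' You then acknowledge that simultaneously controlling all $2^n-1$ shadow constraints is ``the technical heart of the proof.'' As written, this is a program, not a proof --- the existence of a valid dual certificate $z$ with $\sum_\tau z_\tau=\Omega(m^{2-\varepsilon})$ for some $\V$ of size $m$ is exactly the content of the theorem, and is left unestablished. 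I would also flag that pinning $n=\lceil(2-\varepsilon)\log_2 m\rceil$ is unnecessarily restrictive (and likely makes the construction harder); one may take $n$ polynomially smaller than $m$ while still being far from $2^n$, and the family that works in fact does so.

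For comparison, the paper takes the \emph{primal} route rather than building a dual certificate, which sidesteps the union-bound difficulty you identify. The key structural step is Lemma~\ref{lem:lattice}: for the set system $\F(L)$ attached to any finite atomistic lattice $L$, every column of the constraint matrix $A$ is literally equal to a column of the square submatrix $B$ indexed by $\V$ (because the join of any $\sigma\in\NN$ is generated by atoms of some $\tau\in\V$ with the same up-set). Merging duplicate columns then shows that the unique IE-vector supported on $\V$ is already $\ell_1$-optimal --- no dual certificate needs to be exhibited or verified. The concrete lattice used is the subspace lattice of a $d$-dimensional projective space over $F_q$: there $m=\Theta\bigl(q^{\lfloor (d+1)^2/4\rfloor}\bigr)$, the M\"obius coefficient of a $k$-dimensional subspace is $(-1)^k q^{k(k+1)/2}$ (proved by induction from the Cauchy $q$-binomial theorem), and the total $\ell_1$-norm is $\Theta\bigl(q^{d(d+1)/2}\bigr)=\Theta\bigl(m^{2-2/(d+1)}\bigr)$, which gives $\Omega(m^{2-\varepsilon})$ for $d>2/\varepsilon$. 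Your guess that the example should come from a ``suitably structured'' combinatorial design was in the right spirit (projective geometries are indeed such objects), but the missing idea is the lattice-theoretic optimality lemma, which converts the problem into a finite, explicitly computable M\"obius-function calculation instead of a delicate probabilistic estimate over $2^n-1$ constraints.
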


We recall that the $\ell_1$-norm of a real vector $\xx\in\R^d$ 
is $\|\xx\|_1=\sum_{i=1}^d|x_i|$. The $\ell_1$-norm gives a lower
bound on the tradeoff between the number of nonzero coefficients
and their orders of magnitude (we recall that a formula
with $O(m)$ nonzero coefficients is always attainable,
the problem being that the coefficients may be too large).

\ifabstr
We only sketch proofs of the theorems above. In particular, we skip proofs of
auxiliary lemmas. They are proved in the full version~\cite{ie_arxiv}.

\else

\paragraph{Remark on $\ell_1$-norm minimization. }
A useful heuristic for finding ``small'' IE-vectors might be
to look for an IE-vector of minimum $\ell_1$-norm. In the linear-algebraic
formulation, this means finding a solution of $A\xx=\one$ of
minimum $\ell_1$-norm. 

It is well known that finding a solution of minimum $\ell_1$-norm
of a linear system can be done in polynomial time, via linear
programming. Several specialized algorithms for this problem
have also been developed, with better performance than direct
application of general-purpose LP solvers (see, e.g.,
\cite{yang-al} for a recent overview). However, in our setting
the number of columns of the matrix $A$ may be exponential
in $m$ and $n$, and so 
even the input for an $\ell_1$-norm minimizing algorithm
would be too large. 

There are linear programs with exponentially many variables
(and polynomially many constraints) that can still be solved
in polynomial time. For example, one may attempt, at least
for theoretical purposes, to solve the dual linear program 
by the ellipsoid method, provided that a separation oracle
is available. 

In our setting, the task of the separation oracle
can be formulated as follows in the setting of the original
(standardized) set system $\F=\{F_1,\ldots,F_n\}$: 
\emph{Given weights  $w_1,\ldots,w_m\in\Z$ of the points
and  threshold $c$, find a subset $I\subseteq [n]$, if one
exists, such that the sum of weights of the points
in $\bigcap_{i\in I}F_i$ is at least~$c$.}
Unfortunately, as was shown by Hoffmann et al.~\cite{gwop-team},
this problem is NP-complete not only for arbitrary set
systems, but also, e.g., for the case where each $F_i$
is the complement of a hexagon in the plane.
Thus, this approach doesn't
seem to lead to a polynomial-time algorithm for
finding an IE-vector of minimum $\ell_1$-norm
even for rather simple geometric settings.

\paragraph{Topological background.} In order to prove Theorem~\ref{thm:bound}
we need several basic notions from topological combinatorics. We
aim at a self-contained exposition that should make the proof accessible even
to a reader who is not familiar with topological methods (we use the
topological background mostly indirectly). For further reading we refer the
reader to sources such as~\cite{hatcher01, matousek03, munkres84}.
\fi

\section{Preliminaries}\label{sec:prel}

We consider a family $\F=\{F_1, F_2, \ldots, F_n\}$ of sets on a
ground set $S$, and assume that the $F_i$ are all distinct. Besides
the Venn diagram $\V$, we associate yet another set system with $\F$,
namely, the \emph{nerve}\footnote{This is the first notion from
  topological combinatorics that we need. Usually, a nerve also comes
  with an associated topological space that captures some of the
  properties of the underlying family $\F$.  In our case, a purely
  combinatorial description of the nerve is sufficient. We also
  emphasize that the condition $\sigma \neq \emptyset$ in the
  definition of $\NN(\F)$ is not a standard one but it is convenient
  for our purposes.
}  $\NN$ of $\F$:
\[ \NN=\NN(\F) := \biggl\{\sigma \subseteq [n] : \sigma \neq \emptyset,
  \bigcap_{i\in \sigma}F_i \neq \emptyset\biggr\}.\]
So both of $\NN$ and $\V$ have ground set $[n]$, and we have
$\V \subseteq \NN$.

Let us enumerate the elements of $\V$ as $\V=\{\tau_1,\tau_2, \ldots,
\tau_m\}$ in such a way that $|\tau_i| \le |\tau_j|$ for $i<j$, and
let us enumerate $\NN=\{\sigma_1, \sigma_2, \ldots, \sigma_{|\NN|}\}$
so that the sets of $\V$ come first, i.e., $\sigma_i=\tau_i$ for
$i=1,2,\ldots,m$.

In the introduction, we were indexing IE-vectors for $\F$
by all possible subsets $I\subseteq[n]$. But if $I$ is not
in the nerve, the corresponding intersection is empty, and
thus w.l.o.g.~we may assume that its coefficient is zero.
Thus, from now on, we will index IE-vectors $\xx$
as $(x_1, \dots, x_{|\NN|})$, where $x_j$ is the coefficient
of $\mu(\bigcap_{i\in\sigma_j}F_i)$.

\paragraph{IE-vectors from linear algebra.}
Let $A=(a_{jk})$ denote the $0$\,-$1$ matrix with $m$ rows and $|\NN|$
columns such that $a_{jk}=1$ if $\tau_j \supseteq \sigma_k$ and $a_{jk}=0$
otherwise. Let $\one$ denote the $m$-dimensional vector with all
entries equal to $1$.

\begin{lemma}\label{lem:system}
  $\xx \in \R^{|\NN|}$ is an IE-vector for $\F$ if and only if $A
  \xx=\one$.
\end{lemma}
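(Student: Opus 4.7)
The plan is to show that the scalar equation \eqref{e:valid-ie} defining an IE-vector, quantified over all measures, decomposes along the partition of $\bigcup_i F_i$ into Venn regions into exactly the $m$ scalar equations encoded by $A\xx = \one$.

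First I would observe that both sides of \eqref{e:valid-ie} are linear in $\mu$, and that points outside $\bigcup_{i=1}^n F_i$ contribute nothing to either side (any region outside the union contributes $0$ to $\mu(\bigcup F_i)$, and contributes $0$ to each $\mu(\bigcap_{i\in \sigma_k}F_i)$ since $\sigma_k$ is required to be nonempty). Thus it suffices to consider measures supported on $\bigcup_{i=1}^n F_i$, which is the disjoint union of the regions $\sect(\tau_j)$, $j=1,\dots,m$. For such a measure, set $c_j := \mu(\sect(\tau_j)) \ge 0$. Then
\[
\mu\biggl(\bigcup_{i=1}^n F_i\biggr) = \sum_{j=1}^{m} c_j, \qquad
\mu\biggl(\bigcap_{i\in \sigma_k} F_i\biggr) = \sum_{j\colon \tau_j \supseteq \sigma_k} c_j,
\]
because a region $\sect(\tau_j)$ is contained in $\bigcap_{i\in\sigma_k}F_i$ iff $\sigma_k\subseteq\tau_j$, i.e.~iff $a_{jk}=1$.

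Substituting into \eqref{e:valid-ie} and swapping the order of summation, the IE-condition becomes
\[
\sum_{j=1}^{m} c_j \;=\; \sum_{k=1}^{|\NN|} x_k \sum_{j\colon \tau_j \supseteq \sigma_k} c_j
\;=\; \sum_{j=1}^{m} c_j \sum_{k\colon a_{jk}=1} x_k
\;=\; \sum_{j=1}^{m} c_j (A\xx)_j.
\]
Hence $\xx$ is an IE-vector iff $\sum_{j=1}^{m} c_j\bigl(1 - (A\xx)_j\bigr) = 0$ holds for \emph{every} tuple $(c_1,\dots,c_m)$ of nonnegative reals that is realized by some measure.

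The remaining step, and the only one that might look delicate, is to verify that this family of $(c_j)$-tuples is rich enough to force $A\xx = \one$ coordinate-wise. But every region $\sect(\tau_j)$ is nonempty by the very definition of the Venn diagram $\V$, so for each $j_0$ we may pick a point $p_{j_0}\in \sect(\tau_{j_0})$ and test with the point mass $\mu = \delta_{p_{j_0}}$; this realizes $c_j = \delta_{j,j_0}$ and yields the single equation $1 = (A\xx)_{j_0}$. Varying $j_0$ gives $A\xx = \one$, and conversely if $A\xx = \one$ the displayed identity holds trivially for all nonnegative $(c_j)$, hence \eqref{e:valid-ie} holds for every measure. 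This establishes both directions.
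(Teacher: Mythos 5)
Your proof is correct and follows essentially the same route as the paper's: decompose along the Venn regions, observe that the IE condition reduces to $\sum_j c_j(1-(A\xx)_j)=0$ where $c_j=\mu(\sect(\tau_j))$, and then force each coordinate by testing with a point mass in each nonempty region. The only cosmetic difference is that you explicitly note linearity and the irrelevance of mass outside $\bigcup_i F_i$, which the paper leaves implicit.
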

\ifabstr

%
%

\else
\begin{proof}
  A vector $\xx \in \R^{|\NN|}$ is an IE-vector for $\F$ if and only
  if for every finite measure $\mu$ on $S$ we have
  \begin{equation}\label{eq:iff}
  \mu\pth{\bigcup_{i=1}^n F_i} = \sum_{k=1}^{|\NN|}x_k\mu\pth{\bigcap_{i
      \in \sigma_k}F_i}.
  \end{equation}

  We first reformulate Equation~\eqref{eq:iff} using the regions of
  $\F$. The regions decompose $\bigcup_{i=1}^n F_i$ in a way
  that is compatible with the regions $\bigcap_{i \in \sigma}F_i$:
  \[ \bigcup_{i=1}^n F_i = \bigcup_{\tau \in \V} \sect(\tau) \quad \hbox{and
for all } \sigma
    \in \NN, \quad \bigcap_{i \in \sigma}F_i = \bigcup_{\tau
    \in \V \colon \tau \supseteq \sigma} \sect(\tau).\]
  Moreover, the regions are pairwise disjoint. Thus, for every finite measure
  $\mu$ on $S$ we have
  \[
  \mu\pth{\bigcup_{i=1}^n F_i} = \sum_{\tau \in \V} \mu\pth{\sect(\tau)}
  \quad \hbox{and for all }
   \sigma \in \NN, \quad \mu\pth{\bigcap_{i \in \sigma}F_i} = \sum_{\tau
    \in \V \colon \tau \supseteq \sigma} \mu\pth{\sect(\tau)},
  \]
  and Equation~\eqref{eq:iff} is equivalent to
  \[
  \sum_{\tau \in \V} \mu\pth{\sect(\tau)} = \sum_{k=1}^{|\NN|}x_k
  \pth{\sum_{\tau \in \V \colon \tau \supseteq \sigma_k}\mu\pth{\sect(\tau)}}.
  \]
  Using the orderings on $\V$ and $\NN$ and the definition of $A$ we
  obtain that $\xx \in \R^{|\NN|}$ is an IE-vector for $\F$ if and
  only if for every finite measure $\mu$ on $S$ we have
  \begin{equation}\label{eq:translation}
    \sum_{j=1}^m \mu\pth{\sect(\tau_j)} =
    \sum_{k=1}^{|\NN|}x_k\pth{\sum_{j=1}^m a_{j,k} \mu\pth{\sect(\tau_j)}} =
    \sum_{j=1}^{m} \pth{\sum_{k=1}^{|\NN|} a_{j,k}x_k} \mu\pth{\sect(\tau_j)}.
  \end{equation}

  Now, if $A\xx=\one$ then Equation~\eqref{eq:translation} trivially
  holds for all $\mu$ and $\xx$ is an IE-vector for $\F$. Conversely,
  assume that $\xx$ is an IE-vector for $\F$ and thus that
  Equation~\eqref{eq:translation} holds for all $\mu$.  For $1 \le j
  \le m$ we pick $p_j \in \sect(\tau_j)$ and define the measure
  $\mu_j:2^S \to \R$ by $\mu_j(T)=1$ if $p_j \in T$ and $0$ otherwise.
  Equation~\eqref{eq:translation} then specializes to
  \[ 1 = \mu_j\pth{\sect(\tau_j)} = \sum_{k=1}^{|\NN|}
  x_ka_{j,k}\mu_j\pth{\sect(\tau_j)} = \sum_{k=1}^{|\NN|} a_{j,k}x_k.\]
  This implies that $(A\xx)_j=1$. The statement follows.
\end{proof}

\begin{remark}
  In our definition a vector $\xx$ is an IE-vector for $\F$ if and
  only if Equation~\eqref{e:valid-ie} is valid for every finite
  measure.  As it follows from the proof of Lemma~\ref{lem:system}
  this definition is equivalent to extending this requirement to every
  (finitely additive) signed measure. (A signed measure satisfies the
  classical axioms of a measure with the exception that it may take
  negative values.)
\end{remark}
\fi

\begin{example}\label{ex:uniqueness}
  Let $S =
  2^{[n]}\setminus\{[n]\}$ and $F_i=2^{[n]\setminus \{i\}}$ for $i \in
  [n]$. It is easy to see that here $\NN=\V$
  and $A$ is a lower-triangular square matrix with $1$'s on the diagonal.
  Hence $A$ is invertible and, by Lemma~\ref{lem:system},
  $\F$ has a unique IE-vector, namely, the one from the standard
  inclusion-exclusion formula.
\end{example}

\begin{corollary}\label{cor:linear}
  For every finite family $\F$, there is a unique IE-vector
$\coeffv$ supported on $\V$ (that is, such that
$\coeff_I=0$ for $I\not\in\V$), and this $\coeffv$ 
has all entries integral.
\end{corollary}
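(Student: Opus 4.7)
The plan is to reduce the claim to linear algebra on a conveniently-shaped square submatrix of $A$ and then exploit the size-ordering of $\V$ to make this submatrix triangular.

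First, I observe that by Lemma~\ref{lem:system}, an IE-vector $\coeffv$ supported on $\V$ corresponds to a vector $\xx \in \R^{|\NN|}$ with $x_k = 0$ for $k > m$ such that $A\xx = \one$. Since only the first $m$ coordinates of $\xx$ matter, this is equivalent to solving $A'\yy = \one$, where $A'$ is the $m \times m$ submatrix of $A$ consisting of its first $m$ columns (those indexed by $\V$ under the chosen enumeration). Concretely, $A'_{jk} = 1$ iff $\tau_j \supseteq \tau_k$.

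Next I would verify that $A'$ is lower triangular with $1$'s on the diagonal. The diagonal entries are $A'_{jj} = 1$ because $\tau_j \supseteq \tau_j$. For $j < k$, our enumeration gives $|\tau_j| \le |\tau_k|$, so $\tau_j \supseteq \tau_k$ would force $|\tau_j| = |\tau_k|$ and then $\tau_j = \tau_k$; but the $\tau_i$ are pairwise distinct, so $A'_{jk} = 0$. Since $A'$ is a lower-triangular integer matrix with unit diagonal, it has determinant $1$ and is invertible over $\Z$; hence $A'\yy = \one$ has a unique solution $\yy \in \Z^m$, which can even be computed by straightforward forward substitution.

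Finally, I would lift this back to the original problem. Uniqueness of $\yy$ yields uniqueness of the IE-vector $\coeffv$ supported on $\V$, and its entries are the entries of $\yy$ (with zeros on $\NN \setminus \V$), hence integral. Existence is immediate: extending $\yy$ by zeros to a vector $\xx \in \R^{|\NN|}$ gives $(A\xx)_j = \sum_{k=1}^{|\NN|} a_{jk}x_k = \sum_{k=1}^m a_{jk}y_k = (A'\yy)_j = 1$ for every $j$, so by Lemma~\ref{lem:system} this $\xx$ is a valid IE-vector. There is no real obstacle here; the only subtlety is picking the right enumeration of $\V$ so that the relevant submatrix becomes triangular, which the statement before Lemma~\ref{lem:system} already provides.
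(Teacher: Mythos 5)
Your proof is correct and follows essentially the same route as the paper: both form the $m\times m$ submatrix of $A$ on the columns indexed by $\V$, observe that the size-ordering of $\V$ makes it lower triangular with unit diagonal, and read off uniqueness and integrality from that. You merely spell out the triangularity argument in a bit more detail than the paper does.
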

\ifabstr\else
\begin{proof}
  Let $B$ be the $m \times m$ submatrix of $A$ consisting of the first
  $m$ columns of $A$. The IE-vectors for $\F$ supported on $\V$ are in
  one-to-one correspondence with the solutions of $B\yy=\one$.  Since
  $B$ is lower-triangular and has 1's on the main diagonal, it is
  nonsingular, and hence $B\yy=\one$ has exactly one solution.
  Moreover, since $B$ is a lower-triangular $0$-$1$ matrix, this
  solution is integral.
\end{proof}
\fi

\ifabstr The IE-vector of Corollary~\ref{cor:linear}, which can also
be obtained as the M\"{o}bius inverse of the constant function $1$ on
the poset $\V(\F)$, can, unfortunately, have exponentially large
coefficients as the following example shows.

\else
\begin{remark} 
  \label{r:recursive}  
  The matrix $B$ from the proof above can be regarded as the
  zeta-matrix of $\V$ ordered by inclusion. The vector $\coeffv$ from
  Corollary~\ref{cor:linear} can therefore be obtained via the
  M\"{o}bius inversion formula; see~\cite[Chapter 3]{Stanley}.

  This description also yields a recursive formula for $\coeffv$ which we 
  use in Section~\ref{sec:lower}. The condition $(B\yy)_j = 1$ translates
  as $\sum\alpha_{\tau} = 1$ where the sum is taken over all $\tau \in \V$ with
  $\tau \subseteq \tau_j$. That is, $\alpha_{\tau_j} = 1 - \sum \alpha_{\tau}$
  where the sum is taken over all $\tau \in \V$ properly contained in $\tau_j$.
\end{remark}

Unfortunately, the IE-vector with small support given by
Corollary~\ref{cor:linear} might have exponentially large
coefficients, as the following example shows.
\fi

\begin{example}\label{ex:exponential}
  Let $S = [5 \ell]$ for some positive integer $\ell$, and for $i \le
  \ell$, let $g(i)$ stand for the smallest integer $j\ge i$ divisible by $5$;
   that is $g(i) = 5\lceil i/5
  \rceil$. We consider the set system $\F = \{F_1, F_2, \dots,
  F_{5\ell}\}$ on $S$ given by
  $F_i = \{i\} \cup \{g(i) + 1, \dots, 5 \ell\}$. \ifabstr The
  largest coefficient of the IE-vector given by
  Corollary~\ref{cor:linear} is of order $4^{n/5}$.  \else
%
  Now $j \in F_i$ if and only if $i=j$ or $j > g(i)$. In particular,
  no two elements of $S$ belong to the same region and the number of
  regions of $\F$ is $m=|S|=5\ell$, which is also equal to the number
  $n$ of sets in $\F$: $n=m=5\ell$. The lower-triangular
   matrix $B$ from the proof of
  Corollary~\ref{cor:linear} has a simple structure in terms of $5
  \times 5$ blocks: the blocks on the diagonal are identity blocks, and
  the blocks below the diagonal are filled with $1$'s.
   Let $\hat\xx$ denote the
  solution of $B \xx = \one$. The first five rows yield $\hat{x}_1 =
  \hat{x}_2 = \dots = \hat{x}_5 = 1$. The next five rows imply that
  for $j=6,7,\ldots, 10$ we have
  \[ \hat{x}_1 + \hat{x}_2 + \dots + \hat{x}_5 + \hat{x}_j =1,\]
  and so $\hat{x}_6 = \hat{x}_7 = \dots = \hat{x}_{10} = -4$. A simple
  induction yields $\hat{x}_{i} = (-4)^{(g(i)/5) - 1}$.
  Altogether, the largest coefficient is of order $4^{n/5}$. (Replacing
  the constant $5$ by another constant $y$ yields a similar
  exponential growth with basis $(y-1)^{1/y}$; the choice $y=5$
  maximizes the basis of the exponent.)
\fi
%
%
%
%
\end{example}


\paragraph{Abstract tubes.}
\ifabstr\else Naiman and Wynn~\cite{NaimanWynn92, NaimanWynn97}
started their study of simplified inclusion-exclusion formulas with
families $\F=\{F_1,F_2, \ldots, F_n\}$ that were tube-like in the
sense that $F_i \cap F_j \subseteq F_k$ for all $i \le k \le j$ (as in
our Figure~\ref{fig:example-simplification}). They then realized that
the simplifications found for these ``simple tubes'' hold in a broader
setting, leading them to introduce the more general notion of an
abstract tube.  This notion will also play an important role in our
considerations.

%
\fi

\ifabstr\else
\begin{definition}
  \fi An \emph{(abstract) simplicial complex} with vertex set $[n]$ is
  a hereditary system of nonempty subsets of $[n]$.\footnote{As in the
    definition of the nerve, we exclude the empty set from the
    definition of a simplicial complex. This is again non-standard but
    convenient.}  An \emph{abstract tube} is a pair $(\F,\K)$, where
  $\F=\{F_1,F_2,\ldots, F_n\}$ is a family of sets and $\K$ is a
  simplicial complex with vertex set $[n]$, such that for every
  nonempty region $\tau$ of the Venn diagram of $\F$, the subcomplex
  \emph{induced} on $\K$ by $\tau$, $\K[\tau] := \{ \vartheta \in
  \K\colon \vartheta \subseteq \tau \},$ is contractible.\footnote{By
    \emph{contractible} we mean contractibility in the sense of
    topology; there is a topological space defined by $\K[\tau]$ and,
    roughly speaking, `contractible' means that this space can be
    continuously shrunk to a point. \ifabstr \else Readers not at ease
    with this notion may want to look at 
    Remark~\ref{rk:Euler}.
\fi} \ifabstr\else
\end{definition}
\fi

As first noted by Naiman and Wynn~\cite{NaimanWynn92, NaimanWynn97},
if $(\F,\K)$ is an abstract tube, then
\begin{equation}\label{improvedie}
  \mu\pthb{\bigcup_{i=1}^n F_i} = \sum_{I \in
    \K}(-1)^{|I|+1}\mu\biggl(\bigcap_{i \in I}F_i\biggr).
\end{equation}
Moreover, truncating the sum yields upper and lower bounds in the
spirit of the Bonferroni inequalities (\cite{NaimanWynn97};
also see \cite[Theorem 3.1.9]{Dohmen}).

\ifabstr \else
\begin{remark}\label{rk:Euler}
An earlier, more permissive definition of abstract tubes
by~\cite{NaimanWynn92} had the weaker condition ``$\chi(\K[\tau])=1$''
instead of ``$\K[\tau]$ contractible,'' where 
$\chi$ is the \emph{Euler characteristic}.\footnote{The fact that all contractible complexes
  have the same Euler characteristic follows
from~\cite[Theorem~2.44]{hatcher01}. The fact that it equals $1$ can be
verified
on a point.} We recall
that for a simplicial complex $\L$ in our sense, the Euler characteristic is
defined as
$\chi(\L):=\sum_{\sigma\in\L}(-1)^{|\sigma|+1}$. 
In this setting, if $(\F,\K)$ satisfies $\chi(\K[\tau])=1$ for
every $\tau$, then
(\ref{improvedie}) can be proven in a few lines, using Lemma~\ref{lem:system}.
 Indeed,  consider a simplicial complex $\K$ with vertex set $[n]$ and let
  $\xx \in \R^{|\NN|}$ stand for the vector with $x_k =
  (-1)^{|\sigma_k|+1}$ if $\sigma_k \in \K$ and $x_k = 0$ otherwise.
  Since
  \[ (A\xx)_j = \sum_{k \colon \sigma_k \subseteq \tau_j} x_k =
  \sum_{\sigma_k \colon \sigma_k \in \K[\tau_j]}(-1)^{|\sigma_k|+1},\]
we have $(A\xx)_j=\chi(\K[\tau_j])$. Thus, if all the $\K[\tau_j]$
have Euler characteristic 1, then $\xx$ is an IE-vector, and
(\ref{improvedie}) follows.
  
The stronger definition of abstract tubes involving
contractibility, as opposed to the Euler characteristic,
  was needed in order to  
  guarantee that
  truncations of Equation~\eqref{improvedie} also yield
  Bonferroni-type inequalities~\cite[Theorem~3.1.9]{Dohmen}.
\end{remark}
\fi

Small abstract tubes have been identified for families of
balls~\cite{NaimanWynn92, NaimanWynn97, AttaliEdelsbrunner} or
halfspaces~\cite{NaimanWynn97} in $\R^d$, and similar structures were
found for families of pseudodisks~\cite{EdelsbrunnerRamos}. We
establish Theorem~\ref{thm:bound} by proving that for every family of
sets there exists an abstract tube with ``small'' size that, in
addition, can be computed efficiently. We will use the following
sufficient condition guaranteeing that $(\F,\K)$ is an abstract tube;
it is a reformulation of~\cite[Theorem~4.2.5]{Dohmen}\ifabstr \else{}  
(for the reader's convenience we include a simple proof)\fi.  Let
$\MNF(\K)$ denote the system of all inclusion-minimal non-faces of
$\K$, i.e., of all nonempty sets $I\subseteq[n]$ with $I\not\in\K$ but
with $I'\in\K$ for every proper subset $I'\subset I$.

\begin{proposition}\label{prop:sufficient}
  Let $\F=\{F_1,F_2,\ldots, F_n\}$ be a family of sets with Venn
  diagram $\V$ and let $\K$ be a non-empty simplicial complex with vertex set
  $[n]$. If no set of $\V$ can be expressed as a union of sets in
  $\MNF(\K)$, then $(\F,\K)$ is an abstract tube.
\end{proposition}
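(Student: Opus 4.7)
The plan is to show something slightly stronger than contractibility: I will prove that under the hypothesis, $\K[\tau]$ is actually a \emph{cone} for every $\tau\in\V$, i.e., there exists an apex vertex $v\in\tau$ such that $\sigma\cup\{v\}\in\K[\tau]$ for every $\sigma\in\K[\tau]$. Since every cone is contractible (the straight-line homotopy shrinks it to its apex), this will yield the conclusion. I argue by contrapositive: if $\K[\tau]$ has no apex, then $\tau$ can be written as a union of minimal non-faces of $\K$.

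Fix $\tau\in\V$ and assume $\K[\tau]$ is not a cone. I use the standard convention that $\K$ having vertex set $[n]$ means $\{v\}\in\K$ for each $v\in[n]$, so in particular $\K[\tau]$ contains every singleton $\{v\}$ with $v\in\tau$. For each $v\in\tau$, since $v$ is not an apex, there must exist a face $\sigma_v\in\K[\tau]$ such that $\sigma_v\cup\{v\}\notin\K$. The set $\sigma_v\cup\{v\}$ is therefore a non-face of $\K$, so it contains some inclusion-minimal non-face $M_v\in\MNF(\K)$. Now $M_v$ cannot be contained in $\sigma_v$, for otherwise $\sigma_v$ would itself fail to be in $\K$; hence $v\in M_v$. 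Moreover $M_v\subseteq\sigma_v\cup\{v\}\subseteq\tau$ since $\sigma_v\subseteq\tau$ and $v\in\tau$.

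Combining these for all $v\in\tau$ gives
\[
\tau \;=\; \bigcup_{v\in\tau}\{v\} \;\subseteq\; \bigcup_{v\in\tau} M_v \;\subseteq\; \tau,
\]
so $\tau$ is the union of the minimal non-faces $\{M_v : v\in\tau\}$. This contradicts the hypothesis of the proposition, so some vertex $v\in\tau$ is an apex of $\K[\tau]$, proving that $\K[\tau]$ is a contractible cone.

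The main obstacle, such as it is, is book-keeping around the definition of $\MNF(\K)$: one must verify that every non-face of $\K$ actually contains some inclusion-minimal non-face (true by finiteness and the hereditary property of $\K$) and that this minimal non-face necessarily contains the newly added vertex $v$. Once these two points are clearly laid out, the argument reduces to the clean cover-by-singletons computation displayed above.
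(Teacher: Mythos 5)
Your proof is correct and is essentially the contrapositive of the paper's own argument: both establish the stronger fact that each $\K[\tau]$ is a cone, where the potential apex is a vertex of $\tau$ lying in no element of $\MNF(\K)$ contained in $\tau$. The paper exhibits such an apex $a$ directly from the hypothesis (noting in particular that $\{a\}\in\K$ follows, rather than being assumed by convention), while you argue that if no apex exists then $\tau=\bigcup_{v\in\tau}M_v$ is a union of minimal non-faces; the substance is identical.
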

\ifabstr
\else
\begin{proof}
  Let $\tau \in \V$ and let $a \in \tau$ such that $a$ belongs to no
  element of $\MNF(\K)$ contained in $\tau$. Our task is to show 
  that for every simplex $\vartheta \in \K[\tau]$ or $\vartheta = \emptyset$, 
  we have $\vartheta \cup \{a\} \in \K[\tau]$. A simplicial complex
  $\K[\tau]$ satisfying the mentioned condition is known as a \emph{cone} with
  \emph{apex} $a$. Since every cone is contractible, it remains to show the
  condition.
  
  If $\vartheta \cup \{a\} \notin \K[\tau]$, then
  $\vartheta \cup \{a\}$ contains some $\beta \in \MNF(\K)$; since
  $\vartheta \in \K[\tau]$, the face $\beta$ contains $a$, a
  contradiction.  
\end{proof}
\fi

\ifabstr
\section{The upper bound: sketch of a proof of Theorem~\ref{thm:bound}}\label{sec:upper}
\else
\section{The upper bound: proof of Theorem~\ref{thm:bound}}\label{sec:upper}
\fi

\paragraph{Abstract tubes from selectors.}
Let $\F=\{F_1,F_2,\ldots, F_n\}$ be a family of sets, and let $\V$ be
the Venn diagram of $\F$. A \emph{selector} for $\V$ is a map $w\colon
\V \to [n]$ such that $w(\tau) \in \tau$ for every $\tau \in \V$. For
any selector $w$ for $\V$ we define the simplicial complex
\[ \K_w = \{\sigma \in \NN(\F) \colon \hbox{for all nonempty }
\vartheta \subseteq \sigma \hbox{ there is } \tau \in \V \hbox{ such
  that } w(\tau) \in \vartheta \subseteq \tau \}.\]
We observe that $(\F,\K_w)$ is an abstract tube since the complex
$\K_w$ satisfies the sufficient condition of
Proposition~\ref{prop:sufficient}.


\begin{lemma}\label{lem:Kr}
  For any selector $w$ for $\V$, $(\F,\K_w)$ is an abstract tube.
\end{lemma}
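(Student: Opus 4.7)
The plan is to verify the sufficient condition of Proposition~\ref{prop:sufficient}. First I would check the easy things: that $\K_w$ is actually a simplicial complex (hereditary and nonempty). Heredity is immediate, because if $\sigma \in \K_w$ and $\emptyset \neq \sigma' \subseteq \sigma$, then every nonempty $\vartheta \subseteq \sigma'$ is also a nonempty subset of $\sigma$, so the defining witness $\tau \in \V$ for $\vartheta$ is inherited. Nonemptiness: for any fixed $\tau_0 \in \V$, the singleton $\{w(\tau_0)\}$ lies in $\NN$ (since $\{w(\tau_0)\} \subseteq \tau_0$) and trivially has $\tau_0$ as a witness, so $\{w(\tau_0)\} \in \K_w$.

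The heart of the proof is to show that no $\tau \in \V$ is a union of sets in $\MNF(\K_w)$. I would reduce this to the following claim: for every $\tau \in \V$, no minimal non-face $\beta \in \MNF(\K_w)$ with $\beta \subseteq \tau$ contains the element $w(\tau)$. This reduction is immediate, because a union of subsets of $\tau$ that covers $\tau$ must include some subset containing $w(\tau)$.

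To prove the claim, I would argue by contradiction: assume $\beta \in \MNF(\K_w)$, $\beta \subseteq \tau$, and $w(\tau) \in \beta$, and derive $\beta \in \K_w$. Membership in $\NN$ is automatic since $\emptyset \neq \sect(\tau) \subseteq \bigcap_{i \in \tau} F_i \subseteq \bigcap_{i \in \beta} F_i$. For the witness condition, take any nonempty $\vartheta \subseteq \beta$ and split into two cases. If $\vartheta = \beta$, then $\tau$ itself is a valid witness: $w(\tau) \in \beta = \vartheta$ and $\vartheta = \beta \subseteq \tau$. If $\vartheta \subsetneq \beta$, then by minimality of $\beta$ as a non-face we have $\vartheta \in \K_w$, and applying the definition of $\K_w$ to $\vartheta$ as a subset of itself yields some $\tau' \in \V$ with $w(\tau') \in \vartheta \subseteq \tau'$. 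Hence $\beta \in \K_w$, contradicting $\beta \in \MNF(\K_w)$.

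The only mildly subtle step is handling the case $\vartheta = \beta$ in the witness condition; it is precisely there that the hypothesis $w(\tau) \in \beta$ (together with $\beta \subseteq \tau$) is used, which is the whole point of choosing the selector value $w(\tau)$ as the distinguished apex. With this in hand, Proposition~\ref{prop:sufficient} applies and yields that $(\F, \K_w)$ is an abstract tube.
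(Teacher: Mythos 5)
Your proof is correct and follows essentially the same route as the paper: both apply Proposition~\ref{prop:sufficient} after showing that any minimal non-face of $\K_w$ contained in some $\tau\in\V$ must avoid $w(\tau)$. The paper packages this observation via an auxiliary family $\B_w$ of ``admissible minimal non-faces,'' whereas you prove it directly by contradiction (with an explicit case split on $\vartheta=\beta$ versus $\vartheta\subsetneq\beta$); the underlying argument is the same.
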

\ifabstr\bigskip \fi

\ifabstr
\else
\begin{proof} This is simple once the idea behind the definition
of $\K_w$ is explained. Namely, in the condition of  
Proposition~\ref{prop:sufficient} we want to prevent each set $\tau\in\V$
from being a union of minimal non-faces of the simplicial complex $\K$.
Our way of achieving that is to insist that every minimal non-face
$I$ contained in $\tau$ avoids the point $w(\tau)$; thus,
we consider the set system of ``admissible minimal non-faces''
\[
\B_w:=\{I\subseteq[n], I \neq \emptyset: \mbox{ if }I\subseteq\tau\in\V,
\mbox{ then } w(\tau) \notin I\}.
\]
Then the above definition of $\K_w$ can be interpreted as follows: a
simplex $\sigma\in\NN$ belongs to $\K_w$ if it contains no
$I\in\B_w$.\footnote{Note that for the formal verification, the
  condition $\sigma$ contains no $I \in \B_w$ can be written, in
  symbols, as follows: $\forall I \subseteq [n], I \neq
  \emptyset\colon ((\forall \tau \in \V\colon I \subseteq \tau
  \Rightarrow w(\tau) \notin I) \Rightarrow I \not \subseteq \sigma)$.
  This is equivalent to $\forall I \subseteq [n], I \neq
  \emptyset\colon I \subseteq \sigma \Rightarrow (\exists \tau \in
  \V\colon I \subseteq \tau \wedge w(\tau) \in I)$ which is just a
  transcription of $\sigma \in \K_w$.

} (Simplices outside $\NN$ can be ignored,
since their supersets cannot be contained in a set $\tau\in\V$.)
Therefore, all minimal non-faces of $\K_w$ belong
to $\B_w$ or lie outside $\NN$, and hence 
$(\F,\K_w)$ is an abstract tube by Proposition~\ref{prop:sufficient}.
%
\end{proof}
\fi


\ifabstr
\vspace{-0.5cm}
\else
Let us remark that there is no loss of generality in passing
from the abstract tubes as in Proposition~\ref{prop:sufficient}
to those of the form $\K_w$. Indeed, if $\K$ satisfies the
condition of Proposition~\ref{prop:sufficient}, then every
$\tau\in\V$ contains at least one point that is not contained
in any minimal non-face $I$ of $\K$ with $I\subseteq\tau$, and such a point
can be chosen as $w(\tau)$---then we can easily check that
$\K_w\subseteq\K$. (It is sufficient to check that if $I$ is a minimal non-face
of $\K$, then it is also a non-face of $\K_w$. For this we point out that
such a minimal non-face $I$ of $\K$ belongs to the set $\B_w$ defined
above. Therefore it is a non-face of $\K_w$, possibly not a minimal one.) 
\fi

\paragraph{No large simplices in random $\K_w$.}
Let $\rho$ be a permutation of $[n]$. We define
a selector $w_\rho$ for $\V$ by taking $w(\tau)$
as the smallest element of $\tau$ in the linear ordering $\prec$
on $[n]$ given by $\rho(1) \prec \rho(2) \prec \cdots \prec \rho(n)$.
%
%

For better readability we write $\K_\rho$ instead of $\K_{w_\rho}$. We 
want to show that for random $\rho$, $\K_\rho$ 
is unlikely to contain too large simplices, and  thus leads to
a small inclusion-exclusion formula.

\medskip

Let $\Gamma$ denote the incidence matrix of $\V$, that is,
the $0$-$1$ matrix with $m$ rows and $n$ columns where $\Gamma_{ij} =
1$ if and only if $j \in \tau_i$ (if the original system
$\F$ was standardized, then $\Gamma$ is the transposition
of the usual incidence matrix of $\F$). We also denote by $\Gamma_\rho$ the
matrix obtained by applying the permutation $\rho$ to the columns of
$\Gamma$: the $\rho(i)$th column of $\Gamma_\rho$ is the $i$th
column of $\Gamma$ and represents the incidences between permuted
$[n]$ and $\V$. \ifabstr The lemma below says \else We now argue \fi that if $\K_\rho$ contains a large
simplex, then $\Gamma_\rho$ contains a particular substructure. 

We say that a row $R$ of $\Gamma_\rho$ is \emph{compatible} with a subset $I
\subseteq [n]$ if $R$ contains $1$'s in all columns with index in $I$
and $0$'s in all columns with index smaller than $\min(I)$. 

\begin{lemma}
\label{lem:compat}
  If $\rho(\tau) = \{i_1, i_2, \ldots, i_k\}$ for a
  simplex $\tau$ in $\K_\rho$, with $i_1<i_2<\ldots<i_k$, then for every $s \in
  \{1, 2, \ldots, k\}$ the matrix $\Gamma_\rho$ contains a row
  compatible with $\{i_{s}, i_{s+1}, \ldots, i_k\}$.
\end{lemma}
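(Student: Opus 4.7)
The plan is to apply the defining property of $\K_\rho$ to a single, carefully chosen nonempty subset $\vartheta \subseteq \tau$ and then read off the desired compatible row from the resulting element $\tau' \in \V$. Concretely, I would take $\vartheta := \{\rho^{-1}(i_s), \rho^{-1}(i_{s+1}), \ldots, \rho^{-1}(i_k)\}$. Since $\rho(\tau) = \{i_1, \ldots, i_k\}$ and $\rho$ is a bijection, $\vartheta$ is a subset of $\tau$, and $s \le k$ ensures $\vartheta$ is nonempty.

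Since $\tau \in \K_\rho = \K_{w_\rho}$, applying the definition of $\K_w$ (with $w = w_\rho$) to this $\vartheta$ produces some $\tau' \in \V$ with $w_\rho(\tau') \in \vartheta \subseteq \tau'$. The claim will be that the row of $\Gamma_\rho$ indexed by $\tau'$ is compatible with $\{i_s, \ldots, i_k\}$, and the two halves of compatibility correspond exactly to the two clauses of this conclusion. First, the inclusion $\vartheta \subseteq \tau'$ says that $\rho^{-1}(i_j) \in \tau'$ for every $j \ge s$, which through the definition of $\Gamma_\rho$ supplies the $1$'s in columns $i_s, i_{s+1}, \ldots, i_k$. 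Second, the condition $w_\rho(\tau') \in \vartheta$ combined with $w_\rho(\tau')$ being the $\prec$-smallest element of $\tau'$ forces every element of $\tau'$ to have $\prec$-rank at least $i_s$; translated through the column permutation defining $\Gamma_\rho$, this yields the required $0$'s in columns $1, 2, \ldots, i_s - 1$.

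The argument is thus essentially a translation exercise, with the only combinatorial input being the defining property of $\K_\rho$. The main thing to be careful about is tracking the permutation conventions that link the ordering $\prec$, the selector $w_\rho$, and the column-reindexing of $\Gamma_\rho$ — specifically, verifying that ``column $l$ of $\Gamma_\rho$'' corresponds, via $\rho$, exactly to ``the element of $\prec$-rank $l$ in $[n]$'', so that the threshold at $\min I = i_s$ survives the passage from $\prec$-minimality to the natural column ordering. Once that bookkeeping is nailed down, no further work is needed.
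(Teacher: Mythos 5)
Your proof is correct and follows essentially the same route as the paper's: you take $\vartheta = \rho^{-1}(\{i_s,\ldots,i_k\})$, invoke the defining property of $\K_\rho$ to obtain $\tau'\in\V$ with $w_\rho(\tau')\in\vartheta\subseteq\tau'$, read off the $1$'s from $\vartheta\subseteq\tau'$, and read off the leading $0$'s from the $\prec$-minimality of $w_\rho(\tau')$. The only (cosmetic) difference is that you apply the definition of $\K_{w_\rho}$ directly to $\tau$ with the sub-simplex $\vartheta$, whereas the paper first notes that $\vartheta_s$ itself lies in $\K_\rho$ by heredity and then applies the definition to it; the substance and the bookkeeping about the interplay of $\rho$, $\prec$, and the column-permutation are the same.
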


\ifabstr
\else

\begin{proof}
  Let $s \in \{1, 2, \ldots, k\}$, let $I_s = \{i_{s}, i_{s+1},
  \ldots, i_k\}$, and let $\vartheta_s = \rho^{-1}(I_s)$. We refer to
  Figure~\ref{fig:Gamma_rho}. Since $\vartheta_s$ is a simplex of
  $\K_\rho$, there exists $\tau_{j_s} \in \V$ such that
  $w_\rho(\tau_{j_s}) \in \vartheta_s \subseteq \tau_{j_s}$ by
  definition of $\K_\rho$. Since $\vartheta_s \subseteq \tau_{j_s}$,
  we have $I_s = \rho(\vartheta_s) \subseteq \rho(\tau_{j_s})$, and
  hence the ${j_s}$th row of $\Gamma_\rho$ has $1$'s in all columns
  with index in $I_s$.  Since $w_\rho(\tau_{j_s}) \in \vartheta_s$,
  the set $\rho(\tau_{j_s})$ contains no $i$ with $i< i_s$ and the
  ${j_s}$th row of $\Gamma_\rho$ has $0$'s in all columns with index
  smaller than $i_s=\min(I_s)$. It follows that the ${j_s}$th row of
  $\Gamma_\rho$ is compatible with $I_s$.
\end{proof}

\begin{figure}
\begin{center}
\begin{tabular}{l|c|ccc|c|c|c|c|c|}
\multicolumn{1}{c}{} & 
\multicolumn{1}{c}{} 
& $i_1$ & $i_2$ & 
\multicolumn{1}{c}{$i_3$} &
\multicolumn{1}{c}{} & 
\multicolumn{1}{c}{ $i_4$} &
\multicolumn{1}{c}{} & 
\multicolumn{1}{c}{ $i_5$} &
\multicolumn{1}{c}{} 
 \\ 
\cline{2-10}
 \vdots & & & & & & & & & \\
\hhline{~---------} 
$j_3$ & \cellcolor[gray]{0.8} $0 \cdots 0$ & \cellcolor[gray]{0.8} 0 & \cellcolor[gray]{0.8}0 &
\cellcolor[gray]{0.8} 1 & $*\cdots*$ & \cellcolor[gray]{0.8} 1 & $*\cdots*$ & 
\cellcolor[gray]{0.8} 1 & $*\cdots*$ 
\\
\cline{2-10}
 \vdots & & & & & & & & & \\
\cline{2-10}
$j_1$ & $0 \cdots 0$ & 1 & 1 & 1 & $*\cdots*$ & 1 & $*\cdots*$ & 1 & $*\cdots*$ \\
$j_2$ & $0 \cdots 0$ & 0 & 1 & 1 & $*\cdots*$ & 1 & $*\cdots*$ & 1 & $*\cdots*$ \\
$j_4$ & $0 \cdots 0$ & 0 & 0 & 0 & $0\cdots 0$ & 1 & $*\cdots*$ & 1 &
$*\cdots*$ \\
$j_5$ & $0 \cdots 0$ & 0 & 0 & 0 & $0\cdots 0$ & 0 & $0 \cdots 0$ & 1 &
$*\cdots*$ \\
\cline{2-10}
 \vdots & & & & & & & & & \\
\cline{2-10}
\cline{2-10}
\end{tabular}
\end{center}

\caption{Illustration for Lemma~\ref{lem:compat}: If $\rho(\tau) = \{i_1, i_2,
  \ldots, i_5\}$ for a simplex $\tau$ of $\K_\rho$, then $\Gamma_\rho$
  must contain a row $j_s$ compatible with $\{i_s, i_{s+1}, \ldots,
  i_5\}$ for $s=1, 2, \ldots, 5$. The $j_3$ row is emphasized,
  constrained values appearing in grey; rows $j_s$ for other values of
  $s$ are represented consecutively for clarity, but they can appear in any
  order and non-consecutively.}
\label{fig:Gamma_rho}
\end{figure}

We will need the following inequality:

\begin{lemma}\label{c:ineq}
  Let $x_1, \dots, x_r$ be positive real numbers with $x_1 +
  \cdots + x_r\leq n$. Then
  \[
  \frac{x_1}{x_1 + \cdots + x_r} \cdot \frac{x_2}{x_2 + \cdots + x_r}
  \cdots \frac{x_{r-1}}{x_{r-1} + x_r} \leq \left( 1 - \root{r-1} \of
    {\frac{x_r}{n}}\right)^{r-1}.
  \]
\end{lemma}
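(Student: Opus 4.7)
The plan is to rewrite the left-hand side as a telescoping product and then apply the AM-GM inequality twice. Put $S_i := x_i + x_{i+1} + \cdots + x_r$ for $i=1,\dots,r$, so that $S_1 \leq n$ and $S_r = x_r$. Setting $t_i := S_{i+1}/S_i$ for $i=1,\dots,r-1$, each factor on the left-hand side becomes
\[
\frac{x_i}{x_i+\cdots+x_r} \;=\; \frac{S_i - S_{i+1}}{S_i} \;=\; 1-t_i,
\]
so the left-hand side equals $\prod_{i=1}^{r-1}(1-t_i)$. Moreover each $t_i$ lies in $(0,1)$, and the product telescopes: $\prod_{i=1}^{r-1} t_i = S_r/S_1 = x_r/S_1 \geq x_r/n$.

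Next, I would combine two applications of AM-GM. Applied to the positive numbers $1-t_i$, AM-GM gives
\[
\Bigl(\prod_{i=1}^{r-1}(1-t_i)\Bigr)^{1/(r-1)} \;\leq\; \frac{1}{r-1}\sum_{i=1}^{r-1}(1-t_i) \;=\; 1 - \frac{1}{r-1}\sum_{i=1}^{r-1} t_i,
\]
while applied to the $t_i$ themselves it gives $\frac{1}{r-1}\sum t_i \geq \bigl(\prod t_i\bigr)^{1/(r-1)}$. Chaining the two,
\[
\Bigl(\prod_{i=1}^{r-1}(1-t_i)\Bigr)^{1/(r-1)} \;\leq\; 1 - \Bigl(\prod_{i=1}^{r-1} t_i\Bigr)^{1/(r-1)}.
\]

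Finally, since $\prod t_i \geq x_r/n$ we have $(\prod t_i)^{1/(r-1)} \geq (x_r/n)^{1/(r-1)}$, so the right-hand side above is at most $1 - (x_r/n)^{1/(r-1)}$. Both sides of the resulting inequality are non-negative (each $t_i\in(0,1)$ forces the geometric mean of $1-t_i$ to be at most $1$), so raising to the $(r-1)$th power preserves the direction and yields the claimed bound. There is no real obstacle here: the only points to verify are the non-negativity needed to exponentiate, and the telescoping identity $\prod t_i = x_r/S_1$, both of which are immediate from the definitions.
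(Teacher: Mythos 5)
Your proof is correct and is essentially the same as the paper's: both rewrite the product via the partial sums $S_i$ (called $y_\ell$ in the paper), telescope $\prod t_i = x_r/S_1 \ge x_r/n$, and then chain two applications of AM--GM (one to the $1-t_i$, one to the $t_i$). The only cosmetic difference is that you take $(r-1)$th roots throughout and exponentiate at the end, whereas the paper works directly with the $(r-1)$th power.
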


\begin{proof}
  Let us set $y_\ell := x_\ell + x_{\ell+1} + \cdots + x_r$. Then we
  have
\begin{align*}
  \frac{x_1}{x_1 + \cdots + x_r} \cdot \frac{x_2}{x_2 + \cdots + x_r}
  \cdots \frac{x_{r-1}}{x_{r-1} + x_r} \quad & = \quad \frac{y_1 -
    y_2}{y_1} \cdot \frac{y_2 - y_3}{y_2} \cdots \frac{y_{r-1} -
    y_r}{y_{r-1}} \\
  &= \quad \left(1 - \frac{y_2}{y_1} \right) \cdot \left(1 -
    \frac{y_3}{y_2} \right) \cdots
  \left(1 - \frac{y_{r}}{y_{r-1}} \right) \\
  & \le \quad \left( \frac{1 - y_2/y_1 + 1 - y_3/y_2 + \cdots + 1 -
      y_r/y_{r-1}}{r-1}
  \right)^{r-1}\\
  & = \quad \left(1 - \frac{y_2/y_1 + y_3/y_2 + \cdots +
      y_r/y_{r-1}}{r-1}
  \right)^{r-1}\\
  & \le \quad \left(1- \root{r-1} \of{\frac{y_r}{y_1}} \right)^{r-1}\\
  & \le \quad \left(1- \root{r-1} \of{\frac{x_r}n} \right)^{r-1}.
\end{align*}
\end{proof}

\noindent

Now we aim at showing that for a random $\rho$, the condition
in Lemma~\ref{lem:compat} is unlikely to be satisfied for
large $k$. That condition prescribes the existence of $k$
rows in $\Gamma_\rho$ with a certain pattern.
In order to get a good bound for $k$, we won't actually look
for all of these  $k$ rows, but rather we will consider only
each $b$th of them, for a suitable integer parameter~$b$, and ignore
the rest.

Namely, we fix two parameters $r$ and $b$ with $1 < b < n$ and set $k=r b$ (we think
of $r \approx \ln n$ and $b \approx \ln m$). For
an $r$-element  index set $J\subseteq [m]$, 
let $\Gamma_\rho[J]$ denote the submatrix obtained from
$\Gamma_\rho$ by considering only the rows with indices in~$J$.
We say that a permutation $\rho$ is \emph{bad} for $J$
if there exists a $k$-element set of column indices $I = \{i_1, i_2, \ldots,
i_k\}$ with $i_1 < i_2 < \ldots < i_k$ such that for every $s \in \{1,
b+1, \ldots, (r-1)b+1\}$, the matrix $\Gamma_\rho[J]$ contains a row
compatible with $\{i_{s}, i_{s+1}, \ldots, i_k\}$.
Finally, we define $p_J$ as the probability that a random
permutation $\rho$ is bad for~$J$.

\begin{lemma}\label{l:estimate}
We have
  $ p_J \le (1 - (b/n)^{1/(r-1)})^{b(r-1)}$.
\end{lemma}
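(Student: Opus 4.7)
The plan is to characterize the bad event precisely, reduce it to a conjunction of independent events on a random ordering of a nested sequence of intersections, and then apply Lemma~\ref{c:ineq}.

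First, I would observe that if $\rho$ is bad then the assignment of rows in $J$ to milestones is forced. Writing $f_\rho(j) := \min_{x \in \tau_j} \rho(x)$ for the first $\rho$-position of an element of $\tau_j$, the row assigned to milestone $s_\ell = (\ell-1)b+1$ must be the one whose $f_\rho$-value is the $\ell$-th smallest (by Lemma~\ref{lem:compat} and the fact that the $r$ assigned rows must be distinct). Let $(j_1, \ldots, j_r)$ be the $f_\rho$-sorted ordering of $J$ and set $T_\ell := \tau_{j_1} \cap \cdots \cap \tau_{j_\ell}$ and $A_\ell := T_\ell \setminus (\tau_{j_{\ell+1}} \cup \cdots \cup \tau_{j_r})$. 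Compatibility of the rows with the chosen columns then forces that, for each $\ell = 1, \ldots, r-1$, the first $b$ elements of $T_\ell$ in the $\rho$-order precede the first element of $\tau_{j_{\ell+1}}$, and hence all lie in $A_\ell$; denote this event by $F_\ell$. Being bad also requires $t_r := |T_r| \ge b$.

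Next I would argue that, after conditioning on the sorted ordering, the events $F_1, \ldots, F_{r-1}$ are mutually independent. The restriction of $\rho$ to $T_1$ can be built from the bottom up by generating independent uniform orderings of the layers $T_r$ and $T_\ell \setminus T_{\ell+1}$ (for $\ell = 1, \ldots, r-1$), and then combining them by independent uniform interleavings to form the orderings of $T_{r-1}, \ldots, T_1$. The event $F_\ell$ depends only on (i) the interleaving of $T_{\ell+1}$ with $T_\ell \setminus T_{\ell+1}$ (determining whether the first $b$ slots of $T_\ell$'s order lie in $T_\ell \setminus T_{\ell+1}$) and (ii) the internal order of $T_\ell \setminus T_{\ell+1}$ (determining whether those slots land in $A_\ell$), so different $F_\ell$'s use disjoint components of this construction.

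Independence yields $\Pr[F_1 \wedge \cdots \wedge F_{r-1}] = \prod_\ell \Pr[F_\ell]$. The first $b$ elements of $T_\ell$ (under its own uniform internal order) lie in $A_\ell$ with probability $\prod_{i=0}^{b-1} (|A_\ell|-i)/(|T_\ell|-i) \le (|A_\ell|/|T_\ell|)^b$; since $A_\ell \subseteq T_\ell \setminus T_{\ell+1}$, this gives $\Pr[F_\ell] \le (1 - t_{\ell+1}/t_\ell)^b$ with $t_\ell := |T_\ell|$. Applying Lemma~\ref{c:ineq} with $x_\ell = t_\ell - t_{\ell+1}$ for $\ell < r$ and $x_r = t_r$ (so $x_\ell + \cdots + x_r = t_\ell$) yields $\prod_{\ell=1}^{r-1}(1 - t_{\ell+1}/t_\ell) \le (1 - (t_r/n)^{1/(r-1)})^{r-1}$; using $t_r \ge b$, this is at most $(1 - (b/n)^{1/(r-1)})^{r-1}$, and raising to the $b$-th power gives the claimed per-ordering bound.

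The hard part will be combining the per-ordering bounds into a bound on $p_J$ itself, without losing a factor of $r!$ to the union over sorted orderings. Since each bad $\rho$ has a unique sorted $f_\rho$-ordering, the events ``$\rho$ is bad with ordering $\pi$'' are pairwise disjoint, so summing over $\pi$ gives exactly $p_J$. I expect a tighter per-ordering estimate, incorporating the ordering constraint $f_\rho(j_1) < \cdots < f_\rho(j_r)$ as an additional symmetry-breaking factor, to make these bounds sum to the stated quantity $(1 - (b/n)^{1/(r-1)})^{b(r-1)}$.
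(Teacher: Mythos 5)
Your approach is genuinely different from the paper's, and it contains the gap you yourself flag at the end. The paper partitions $[n]$ into classes $Q_0,\dots,Q_r$ by how many rows of $J$ contain each element; this partition is \emph{fixed in advance} and independent of $\rho$. The necessary condition on a bad $\rho$ is that the $b$th smallest element of $\rho(Q_\ell)$ precedes the smallest element of $\rho(Q_{\ell+1})\cup\cdots\cup\rho(Q_r)$ for every $\ell$, and $p_J$ is then bounded by a telescoping product of conditional probabilities, each a hypergeometric-tail estimate $\le\bigl(|Q_\ell|/(|Q_\ell|+\cdots+|Q_r|)\bigr)^b$, after which Lemma~\ref{c:ineq} finishes. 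Because the partition is deterministic there is never any union over orderings, and no $r!$ appears.

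By contrast, you condition on the $f_\rho$-sorted ordering of $J$, which makes $T_\ell$ and $A_\ell$ deterministic within each branch but forces a sum over all $r!$ orderings at the end. As you observe, naively this loses a factor of $r!$, and the conjectured fix --- that the constraint $f_\rho(j_1)<\cdots<f_\rho(j_r)$ contributes a compensating $1/r!$ --- is asserted, not proved; there is no a priori reason the ordering event and the $F_\ell$'s should be negatively correlated in precisely this way. The claimed mutual independence of the $F_\ell$'s is also suspect under this conditioning: the ordering event is determined by $\min$'s of $\rho$ over the full $\tau_{j_\ell}$'s (not only over $T_1$), so it correlates with the interleavings you treat as independent components. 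To repair the argument you would need to carry the ordering constraint through the probability computation, which in effect reconstructs the paper's argument; it is cleaner to drop the conditioning and work with the $\rho$-independent column-count partition from the start.
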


\begin{proof}
  Let $\rho$ be a bad permutation for $J$,
and let $I=\{i_{s}, i_{s+1}, \ldots, i_k\}$ be the corresponding
set of column indices.

  Let $\ell \in \{0, 1, \ldots, r-1\}$.
  By the compatibility conditions we have that for $i < i_{\ell \cdot b+1}$, the $i$th
  column of $\Gamma_\rho[J]$ contains at most $\ell$ entries~$1$; see
  Figure~\ref{f:estimate}.
  Moreover, for $i \in \{i_{\ell\cdot b+1}, i_{\ell\cdot b+2}, \ldots,
  i_{(\ell+1)\cdot b}\}$, the $i$th column of $\Gamma_\rho[J]$
  contains exactly $\ell + 1$ entries $1$.

\begin{figure}
\noindent\begin{tabular}{l|c|c|c|c|c|c|c|c|c|c|c|c|c|c|c|c|c|}
\multicolumn{1}{c}{} & 
\multicolumn{1}{c}{} & 
\multicolumn{1}{c}{$i_1$} &
\multicolumn{1}{c}{} & 
\multicolumn{1}{c}{$i_2$} &
\multicolumn{1}{c}{$\cdots$} &
\multicolumn{1}{c}{$i_b$} & 
\multicolumn{1}{c}{} & 
\multicolumn{1}{c}{ $i_{b+1}$} &
\multicolumn{1}{c}{$\cdots$} & 
\multicolumn{1}{c}{ $i_{2b}$} &
\multicolumn{1}{c}{} & 
\multicolumn{1}{c}{ $i_{2b+1}$} &
\multicolumn{1}{c}{$\cdots\cdots$} & 
\multicolumn{1}{c}{$i_{(r-1)b+1}$} &
\multicolumn{1}{c}{$\cdots$} &
\multicolumn{1}{c}{$i_{rb+1}$} &
\multicolumn{1}{c}{}
\\ 
\cline{2-18}
 & $0\cdots0$ &1& $*$ &1 &$\cdots$&1 & $*$ &1 &$\cdots$& 1 & $*$ & 1 &$\cdots\cdots$ 
& 1 &$\cdots$ & 1 & $*$\\
\cline{2-18}
 &
\multicolumn{1}{|l}{0}&
\multicolumn{5}{c}{$\cdots$}&
\multicolumn{1}{c|}{0} & 1 &$\cdots$ & 1 & $*$ & 1 &$\cdots\cdots$ & 1
		       &$\cdots$ & 1 & $*$ \\
\cline{2-18}
 &
\multicolumn{1}{|l}{0}&
\multicolumn{9}{c}{$\cdots$} &
\multicolumn{1}{c|}{0}& 1 & $\cdots\cdots$& 1 &$\cdots$& 1 & $*$ \\
\cline{2-18}
& \multicolumn{17}{|c|}{\vdots} \\
\cline{2-18}
 &
\multicolumn{1}{|l}{0}&
\multicolumn{11}{c}{$\cdots$}&
\multicolumn{1}{c|}{0} & 1 &$\cdots$& 1 & $*$ \\
\cline{2-18}
\end{tabular}
\caption{Compatibility conditions in Lemma~\ref{l:estimate}. Only the rows of
  $J$ are shown and similarly as
before, and their order can be arbitrary.}
\label{f:estimate}
\end{figure}

  We now partition $[n]$ into $[n]=Q_0 \cup Q_1 \cup \ldots \cup Q_r$,
  where $Q_\ell$ consists of the indices of those columns of $\Gamma_\rho[J]$ that
  contain exactly $\ell$ entries $1$ (and $r-\ell$ entries $0$). In particular,
  from the discussion above, $|Q_\ell| \geq b$ for $\ell \in [r]$. For
  $\ell \in [r]$ and $p \in [b]$, let $g_\ell^{(p)}$ denote the
  $p$th smallest element of $\rho(Q_\ell)$. A necessary condition
  on $\rho$ is
  \[ g_1^{(b)} < g_2^{(1)} < g_2^{(b)} < g_3^{(1)} < \ldots <
  g_{r-1}^{(b)} < g_r^{(1)}.\]

  Now, let us assume that $\rho$ is a random permutation (uniformly chosen).
  For $\ell \in [r]$, let $E_\ell$ denote the event $E_\ell :=
  \{g_\ell^{(b)} < \min(g_{\ell+1}^{(1)}, g_{\ell+2}^{(1)}, \dots,
  g_{r}^{(1)})\}$, and we bound $p_J$ by the conditional probability
  \begin{equation} \label{e:conditional}
    p_J \le \Prob(E_1)\cdot \Prob(E_2| E_1) \cdot \Prob(E_3| E_1 \cap E_2) \cdots
    \Prob(E_{r-1}| E_1 \cap \cdots \cap E_{r-2}).
  \end{equation}
%
For $\ell \in
  [r-1]$, $\Prob(E_\ell| E_1 \cap \cdots \cap E_{\ell - 1})$ is the
  probability that the $b$ smallest elements of $\rho(Q_\ell) \cup \rho(Q_{\ell +
    1}) \cup \cdots \cup \rho(Q_{r})$ belong to $\rho(Q_\ell)$. This probability is
  equal to
  \[ 
  \binom{|Q_\ell|}b \bigg/ \binom{|Q_\ell| + |Q_{\ell + 1}| + \cdots
    |Q_r|}b \le \pth{\frac{|Q_\ell|}{|Q_\ell| + |Q_{\ell + 1}| +
      \cdots |Q_r|}}^b.
  \]
  So, letting $x_\ell = |Q_\ell|$, Inequality~\eqref{e:conditional}
  implies
  \[ p_J \le \pth{\frac{x_1}{x_1+x_2+\ldots + x_r}\cdot\frac{x_2}{x_2+x_3+\ldots + x_r} \cdot \ldots \cdot \frac{x_{r-1}}{x_{r-1} + x_r}}^b
\le \pth{ 1 - \root{r-1} \of {\frac{|Q_r|}{n}}}^{b(r-1)},\]
the last inequality being  Lemma~\ref{c:ineq}.
%
%
Then the lemma follows  using $|Q_r| \ge b$.
\end{proof}
\fi
\ifabstr
\begin{proof}[Sketch of a proof of Theorem~\ref{thm:bound}]
 \else
\begin{proof}[Proof of Theorem~\ref{thm:bound}]
\fi
Let $n$ and $m\ge 2$ be integers.\footnote{Note that the case $m=1$ is somewhat
trivial since every maximal face of $\NN$ belongs to $\V$, and thus there is an
IE-vector with a single non-zero coefficient, namely $1$, in this case.} Let $\F=\{F_1,F_2, \ldots, F_n\}$
  be a family of $n$ sets whose Venn diagram $\V$ has size $m$. 
\ifabstr 
We argue that if $\rho$ is a permutation of $[n]$ chosen uniformly at
random, the probability that all compatibility conditions of
Lemma~\ref{lem:compat} are satisfied for some $\{i_1, \dots, i_k\}$ is
at most $\frac12$ for $k := \lceil 2e \ln m\rceil \lceil 2+\ln
\frac{n}{\ln m} \rceil$. In particular, there exists a permutation
$\rho^*$ such that $\K_{\rho^*}$ contains no simplex of size $k$ (or
larger).  Lemma~\ref{lem:Kr} concludes the proof of
Equation~\eqref{improvedie}.
\else
Let
  $p(k)$ denote the probability that $\K_\rho$ contains at least one
  simplex of size $k$, where  $\rho$ is chosen uniformly at random among all
  permutations of $[n]$. From 
  Lemmas~\ref{lem:compat}~and~\ref{l:estimate}, for every $r>2$ and
  $b\ge 2$ we have
  \[ p(rb) \le \binom{m}{r}\pth{1 - \sqrt[r-1]{b/n}}^{b(r-1)} \le m^r
  e^{b(r-1) \ln \pth{1 - \root{r-1} \of{b/n}}} \le m^r
  e^{-b(r-1)\sqrt[r-1]{b/n}}.\]
  Assuming that $b \ge 2e\ln m$, we get
  $p(rb) \le m^{r-2e(r-1)\sqrt[r-1]{{b}/n}}$,
%
  and choosing $r \ge 1+\ln\frac{n}{b}$, we obtain
  \[ \sqrt[r-1]{{b}/n} = e^{-\frac1{r-1} \ln \frac{n}b} \ge e^{-1} \quad
  \hbox{and} \quad p(rb) \le m^{2-r} \le \tfrac12.\]
  
Thus, with $D = \lceil 2e \ln m\rceil \lceil 2+\ln \frac{n}{\ln m} \rceil$
as in the theorem, we have $p(D)\le\frac12$ (note that setting $r = \lceil 2+\ln
\frac{n}{\ln m} \rceil$ implies $r > 2$ as required since $m \leq 2^n$).
So there exists a permutation
  $\rho^*$ of $[n]$ such that $\K_{\rho^*}$ contains no simplex of
  size $D$ (or larger).  By
  Lemma~\ref{lem:Kr}, $(\F,\K_{\rho^*})$ is an abstract tube and
  $\K_{\rho^*}$ has at most $\sum_{i=1}^D \binom{n}{i}$
  simplices. The IE-vector obtained from the abstract
tube $(\F,\K_{\rho^*})$ as in Equation~\eqref{improvedie}
is as claimed in the theorem.\fi

In order to actually compute a suitable coefficient vector,
we choose a random permutation $\rho$ and compute
$\K_\rho$\ifabstr\else~by the following incremental algorithm\fi.
\ifabstr
\else
We use two auxiliary set systems $\A$ and $\B$,
initialized to $\A=\B=\{\emptyset\}$ (the idea is that
$\B$ contains all the simplices of $\K_\rho$
found so far, and $\A\subseteq\B$ contains those for
which we still need to test one-element extensions).
In each step, we take some $\sigma\in\A$, remove it
from $\A$, and for each $i\not\in\sigma$, we test
whether $\sigma\cup\{i\}$ belongs to $\K_\rho$
(for this,  we just check
if there is $\tau \in \V$ such that $w_{\rho}(\tau) \in
  \sigma \cup \{i\} \subseteq \tau$; note that we have a direct access to $\V$
  in $O(m)$ time since $\F$ is standardized). Those 
$\sigma\cup\{i\}$ that pass this test are added 
to both $\A$ and $\B$. The algorithm finishes
either when $\A=\emptyset$ (in this case we
set $\K_\rho=\B\setminus\{\emptyset\}$ and return
the corresponding IE-vector), or when we first discover
a simplex $\sigma\in\K_\rho$ of size larger than~$D$.
In the latter case, we discard the current permutation
$\rho$, choose a new one, and repeat the algorithm. 

\fi
The choice of a random permutation $\rho$ takes $O(n \ln n)$ time and
$n$ random bits.  Accepting or rejecting a new simplex by brute-force
testing takes $O(mn)$ time. The expected number of times we have to
start over with a new permutation $\rho$ is $O(1)$. Altogether, the
expected running time of this algorithm is $O\pth{\binom{n}{D}mn} =
m^{O(\ln^2 n)}$.
\end{proof}

\ifabstr
 \section{The lower bound: sketch of a proof of Theorem~\ref{t:lwb}}\label{sec:lower}
\else
\section{The lower bound: proof of Theorem~\ref{t:lwb}}\label{sec:lower}

For every $m$ between $n$ and $2^n$ there exists a system of $n$ sets
with Venn diagram of size $m$ whose only IE-vector has $m$ nonzero
entries. Indeed, let $\K=\{\vartheta_1, \vartheta_2, \ldots,
\vartheta_m\}$ be a simplicial complex over $[n]$ such that $[n] =
\bigcup \K$ and $|\K|=m$. We define $F_i =\{t \in [m]\colon i \in
\vartheta_t\}$ for $1 \le i \le n$ and put $\F=\{F_1,F_2, \ldots,
F_n\}$. It can easily be checked that $\V(\F)=\NN(\F) = K$ and so, as
observed in Example~\ref{ex:uniqueness}, the matrix $A$ is square,
lower-triangular, and has $1$'s on the diagonal; thus, there is a
unique IE-vector for $\F$ and it has $m$ nonzero entries. In this
section we improve on this lower-bound.

We recall that by Corollary~\ref{cor:linear}, every set system $\F$
has a unique IE-vector with support in the Venn diagram $\V(\F)$. We
first argue that for some set systems constructed from lattices, this
IE-vector is the one with minimal $\ell_1$-norm. We then provide an
explicit construction, based on projective spaces over finite fields,
where the $\ell_1$-norm is near-quadratic in $m$.

\paragraph{Set systems from lattices.} 

We need to work with (finite) lattices as order-theoretic notions. A finite
partially ordered set $L$ is a \emph{lattice} if for every subset $S$ of $L$
there is the least upper bound for $S$ called the \emph{join} of $S$ and the
greatest lower bound called the \emph{meet} of $S$. A finite lattice always
contains the least element $0$. An \emph{atom} is an element $a \in L$ such
that $0$ is the only element lesser than $a$. A lattice is \emph{atomistic} if
each element is a join of some subset of atoms.

Given a finite atomistic lattice $L$ we construct the following set system $\F
= \F(L)$. Up to a relabeling, we can assume that the set of atoms 
of $L$ is $\At=\{1,2,\ldots n\}$.
For every atom $a\in \At$ we define $F_a:=\{x\in L\colon x\geq a\}$, and
for every $x\in L$ we set $\At_x:=\{a\in \At\colon a\leq x\}$.
For $\F=\{F_a\colon a\in \At\}$ we have $\V(\F)=\{\At_x\colon x\in L\setminus \{0\}\}$.
In particular, $\V(\F)$ equipped with the inclusion relation is isomorphic to
$L\setminus\{0\}$. 
Also note that $x$ is the join of $\At_x$ since $L$ is atomistic.


\begin{lemma}\label{lem:lattice}
  Let $L$ be a finite atomistic lattice and $\F = \F(L)$ be the set
  system described above. Then among all IE-vectors for $\F$, the one
  with support in $\V(\F)$ has minimal $\ell_1$-norm.
\end{lemma}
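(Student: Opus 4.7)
The plan is to exploit the atomistic structure of $L$ to show that the matrix $A$ from Lemma~\ref{lem:system} has many repeated columns, so that every IE-vector can be \emph{consolidated} onto the coordinates indexed by $\V(\F)$ without increasing its $\ell_1$-norm; combined with Corollary~\ref{cor:linear}, this pins down the $\V$-supported IE-vector as an $\ell_1$-minimizer.

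First I would unfold the nerve and Venn diagram of $\F=\F(L)$. For any nonempty $\sigma\subseteq\At$,
\[
\bigcap_{a\in\sigma} F_a \;=\; \{x\in L : x\geq a \text{ for every } a\in\sigma\} \;=\; \{x\in L : x\geq\textstyle\bigvee\sigma\},
\]
which always contains $\bigvee\sigma$; hence $\NN(\F) = 2^{\At}\setminus\{\emptyset\}$. A region $\sect(\sigma)$ is nonempty exactly when $\sigma=\At_{\bigvee\sigma}$, and the atomistic hypothesis $\bigvee\At_x=x$ makes this equivalent to $\sigma=\At_x$ for some $x\in L\setminus\{0\}$, recovering $\V(\F)=\{\At_x:x\in L\setminus\{0\}\}$.

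Next comes the key observation. For every $\sigma\in\NN$ and every $\tau=\At_t\in\V$,
\[
\tau\supseteq\sigma \iff t\geq a \text{ for all } a\in\sigma \iff t\geq\textstyle\bigvee\sigma \iff \tau\supseteq\At_{\bigvee\sigma}.
\]
Thus the column of $A$ indexed by $\sigma$ equals the column indexed by $\At_{\bigvee\sigma}$; the latter does lie in $\V$ because $\bigvee\sigma\neq 0$ (any atom is strictly above $0$).

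Finally, given any IE-vector $\xx\in\R^{|\NN|}$, define $\xx'\in\R^{|\NN|}$ by
\[
x'_\tau \;=\; \sum_{\sigma\in\NN\,:\,\At_{\bigvee\sigma}=\tau} x_\sigma \quad\text{for } \tau\in\V, \qquad x'_\sigma=0 \quad\text{for } \sigma\notin\V.
\]
Since the columns of $A$ in each fiber of $\sigma\mapsto\At_{\bigvee\sigma}$ coincide, $A\xx'=A\xx=\one$, so $\xx'$ is again an IE-vector by Lemma~\ref{lem:system}, and the triangle inequality gives $\|\xx'\|_1\leq\|\xx\|_1$. By Corollary~\ref{cor:linear} the IE-vector supported on $\V$ is unique, so $\xx'$ must equal it; consequently the $\V$-supported IE-vector has $\ell_1$-norm at most $\|\xx\|_1$ for every IE-vector $\xx$. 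I do not anticipate a serious obstacle; the only step that really uses anything nontrivial is the column-equality observation, which is precisely where the atomistic hypothesis enters.
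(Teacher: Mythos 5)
Your proof is correct and takes essentially the same approach as the paper: both identify that the column of $A$ indexed by $\sigma$ coincides with the column indexed by $\At_{\bigvee\sigma}\in\V$ (the atomistic hypothesis entering precisely there), consolidate any IE-vector onto $\V$ without increasing $\ell_1$-norm, and invoke the uniqueness from Corollary~\ref{cor:linear}. The only cosmetic difference is that you merge all columns in one step via the fiber sum, while the paper describes the reduction one column at a time.
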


\begin{proof} 
  Let $A$ be the matrix with rows indexed by $\V$ and columns indexed
  by $\NN=\NN(\F)$, as 
  defined before Lemma~\ref{lem:system}, and let $B$ be the $m\times
  m$ submatrix consisting of the first $m$ columns of~$A$.

  We want to show that every column of $A$ is equal to a column of $B$.
  By the definition of $A$, this means that for every $\sigma\in\NN$
  we need to find some $\nu\in\V$ such that $\{\tau\in\V:
  \sigma\subseteq\tau\}= \{\tau\in\V:\nu\subseteq\tau\}$. 
  We set $s$ to be the join of $\sigma$. (Note that $\sigma$ is a subset of 
  $[n] = \At$ and, therefore, of $L$.) We aim to show that $\At_s$ is the
  required $\nu$. This way, we have obtained a $\nu \in \V$ such that the join
  of $\nu$ equals the join of $\sigma$ since $s$ is also the join of the atoms
  contained in $\At_s$.
  A set $\tau \in \V$ can be also described as $\At_x$ for some
  $x \in L \setminus \{0\}$ due to our description of $\V$. Then the condition
  $\sigma\subseteq\tau$ translates to $x \geq a$ for every $a \in \sigma$. This
  is equivalent with $x \geq s$ since $s$ is the join of $\sigma$. Similarly,
  $\nu\subseteq\tau$ translates to $x \geq a$ for every $a \in \nu$, which is
  again equivalent with $x \geq s$. Therefore, $\sigma\subseteq\tau$ if and only
  if $\nu\subseteq\tau$ as we need.

  Hence every column of $A$ occurs in $B$ as asserted.  It follows
  that every solution of $A\xx=\one$ can be transformed to a solution
  of $B\yy=\one$ with the same or smaller $\ell_1$-norm (if $k$ is the
  index of a column outside $B$ with $x_k\ne 0$, and that $k$th column
  equals the $j$th column of $B$, then we can zero out $x_k$ while
  replacing $x_j$ with $x_j+x_k$).  Since $B\yy=\one$ has a unique
  solution, it has to be a solution of minimum $\ell_1$-norm as
  claimed.
\end{proof}

\paragraph{Construction based on projective spaces.}

Let $q$ be a power of a prime number. Let $P$ be a projective space of
dimension $d$ over the finite field $F_q$.  That is, the points of $P$
are all $1$-dimensional subspaces of the vector space $F_q^{d+1}$, and
$k$-dimensional subspaces of $P$ correspond to $(k+1)$-dimensional
linear subspaces of $F_q^{d+1}$.  We let $L$ be the lattice of all
subspaces of $P$ (including the zero one, of projective dimension
$-1$, as zero), where the join of subspaces of $P$ corresponds to the
(projective) span and the meet corresponds to the intersection. It is
easy to check (and well known) that $L$ is an atomistic lattice.

 We
obtain our lower bound from the family $\F = \F(L)$ and so, according to
Lemma~\ref{lem:lattice}, we need only to compute the size of $\V(\F)$
and the $\ell_1$-norm for the IE-vector with support in $\V$ to
provide a lower bound.
%
%
In order to do so, we need to work with $q$-binomial coefficients.
\begin{definition}[$q$-binomial coefficients]\leavevmode
\begin{enumerate}
\item[\rm(1)] Given a positive integer $k$, 
  we define $[k]_q := 1 + q + q^2 + \ldots + q^{k-1}$.
\item[\rm(2)] Given nonnegative integers $n$ and $k$ with $n \geq k$,
  we define
    \[\binom{n}{k}_q := \frac{[n]_q[n-1]_q[n-2]_q\cdots
    [n-k+1]_q}{[1]_q[2]_q[3]_q\cdots [k]_q}.\]
 \end{enumerate}
\end{definition}

We remark that it is well known that $\binom{n}{k}_q$ is actually a
polynomial in $q$ since the division is
exact. 
From the definition above we deduce that the leading term of
$\binom{n}{k}_q$ is $q^{k(n-k)}$.  We also need the following facts
regarding $q$-binomial coefficients to finish the calculations. See,
for example,~\cite{Cohn04} and~\cite{PolyaAlexanderson71}.
\begin{lemma}\leavevmode
\label{l:q_binom}
 \begin{enumerate}
  \item[\rm(1)] The number of $k$-dimensional subspaces of a $d$-dimensional projective space over $F_q$ is $\binom{d+1}{k+1}_q$.
  \item[\rm(2)]{\rm (The Cauchy binomial theorem) }
        \[\sum_{i=0}^k q^{\frac{i(i-1)}{2}}\binom{k}{i}_q t^i =
	\prod_{i=0}^{k-1}(1 + tq^i).\]
 \end{enumerate}
\end{lemma}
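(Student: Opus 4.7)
The plan is to prove the two parts separately, both by standard arguments; neither presents a genuine obstacle, so the main goal is to keep the bookkeeping clean.

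For part (1), I would count $(k+1)$-dimensional linear subspaces of $F_q^{d+1}$, which correspond bijectively to $k$-dimensional projective subspaces of $P$. First I would count ordered linearly independent $(k+1)$-tuples in $F_q^{d+1}$: the first vector can be any nonzero vector (giving $q^{d+1}-1$ choices), and having chosen the first $j$ vectors spanning a $j$-dimensional subspace, the $(j+1)$st vector must avoid that subspace (giving $q^{d+1}-q^j$ choices). The total is $\prod_{j=0}^{k}(q^{d+1}-q^j)$. I would then divide by the number of ordered bases of a fixed $(k+1)$-dimensional subspace, which by the same argument equals $\prod_{j=0}^{k}(q^{k+1}-q^j)$. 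Factoring $q^j$ out of each term in the numerator and denominator, the powers of $q$ cancel and the quotient becomes $\prod_{j=0}^{k}\frac{q^{d+1-j}-1}{q^{k+1-j}-1}$, which after reindexing and dividing numerator and denominator by $(q-1)^{k+1}$ is precisely $\binom{d+1}{k+1}_q$ as defined.

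For part (2), I would proceed by induction on $k$, denoting the right-hand side by $P_k(t):=\prod_{i=0}^{k-1}(1+tq^i)$. The base case $k=0$ gives $P_0(t)=1$, matching the empty sum (with the $i=0$ term equal to $1$ when the sum upper limit is taken correctly, i.e. one considers $k=0$ as the term $q^0 \binom{0}{0}_q t^0 = 1$). For the inductive step, I would use the identity $P_{k}(t) = (1+tq^{k-1})P_{k-1}(t)$, substitute the induction hypothesis for $P_{k-1}(t)$, and compare the coefficient of $t^i$ on both sides. This reduces the claim to the $q$-Pascal recurrence
\[
\binom{k}{i}_q = \binom{k-1}{i}_q + q^{k-i}\binom{k-1}{i-1}_q,
\]
(with a compensating factor $q^{i(i-1)/2}$ tracked carefully), which I would verify directly from the definition of $\binom{k}{i}_q$ by expanding the left-hand side and splitting the leading $[k]_q = q^{k-i}[i]_q + [k-i]_q$ factor appropriately.

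The only point requiring some care is matching the exponent $q^{i(i-1)/2}$ across the induction step: the factor $tq^{k-1}$ contributes an extra $q^{k-1}$ to the $t^i$ coefficient arising from the $(i-1)$-term of $P_{k-1}$, and one must verify that $q^{(i-1)(i-2)/2}\cdot q^{k-1}$ matches $q^{i(i-1)/2}\cdot q^{k-i}$, which is a straightforward exponent check. Since both identities are classical (see Cohn \cite{Cohn04} and P\'olya--Alexanderson \cite{PolyaAlexanderson71}), I expect the writeup to be short and entirely routine.
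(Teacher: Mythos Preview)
The paper does not actually prove Lemma~\ref{l:q_binom}; it merely states the two facts and cites \cite{Cohn04} and \cite{PolyaAlexanderson71} as references. Your proposal supplies the standard proofs that one would find in those references, and both arguments are correct: the orbit--stabilizer count for part~(1) is the canonical one, and your induction on $k$ for part~(2), reducing to the $q$-Pascal identity $\binom{k}{i}_q=\binom{k-1}{i}_q+q^{k-i}\binom{k-1}{i-1}_q$ via the splitting $[k]_q=[k-i]_q+q^{k-i}[i]_q$, goes through cleanly (your exponent check $q^{(i-1)(i-2)/2}\cdot q^{k-1}=q^{i(i-1)/2}\cdot q^{k-i}$ is correct). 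So there is nothing to compare against from the paper itself; your write-up simply fills in what the authors chose to omit.
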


Now we can finally estimate the size of $|\V(\F)|$ and the $\ell_1$-norm of the
resulting IE-formula.

\begin{lemma}\leavevmode
 \begin{enumerate}
  \item[\rm(1)] The number of nonempty subspaces of $P$, that is, the size
    of $\V(\F)$ is $\Theta\Bigl(q^{\left\lfloor{(d+1)^2}/{4}\right\rfloor}\Bigr)$.
  \item[\rm(2)] In the (unique) IE formula for $\F$, 
the coefficients of the subspaces of dimension $k$ are all equal
    to $(-1)^kq^{\frac{k(k+1)}{2}}$.
  \item[\rm(3)] The $\ell_1$-norm of the resulting IE-formula is $\Theta\Bigl(q^\frac{d(d+1)}{2}\Bigr)$.
 \end{enumerate}
\end{lemma}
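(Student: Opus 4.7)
The plan is to establish the three parts in order; the crux is a specialization of the Cauchy $q$-binomial theorem in part (2).

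For part (1), since $\V(\F) \cong L \setminus \{0\}$ is in bijection with the collection of nonempty projective subspaces of $P$, Lemma~\ref{l:q_binom}(1) gives $|\V(\F)| = \sum_{k=0}^{d} \binom{d+1}{k+1}_q$. The leading term of $\binom{d+1}{j}_q$ is $q^{j(d+1-j)}$, which over $j \in \{1,\ldots,d+1\}$ is maximized at $j = \lfloor (d+1)/2 \rfloor$, yielding the estimate $\Theta(q^{\lfloor (d+1)^2/4 \rfloor})$.

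For part (2), I would induct on the dimension $k$. The base case $k = 0$ is immediate: an atom has no proper nonempty subspace below it, so $\alpha = 1$ by the recursion of Remark~\ref{r:recursive}. For the inductive step, fix a subspace $x$ of dimension $k$; by Lemma~\ref{l:q_binom}(1) applied to $x$, it contains $\binom{k+1}{i+1}_q$ subspaces of each dimension $i < k$, so the recursion together with the inductive hypothesis gives
\[
\alpha_x = 1 - \sum_{i=0}^{k-1} \binom{k+1}{i+1}_q (-1)^i q^{i(i+1)/2} = 1 + \sum_{j=1}^{k} (-1)^j q^{j(j-1)/2} \binom{k+1}{j}_q
\]
after the substitution $j = i+1$. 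Applying the Cauchy binomial theorem (Lemma~\ref{l:q_binom}(2)) with upper index $k+1$ and $t = -1$ gives
\[
\sum_{j=0}^{k+1} (-1)^j q^{j(j-1)/2} \binom{k+1}{j}_q = \prod_{i=0}^{k}(1 - q^i) = 0,
\]
since the $i = 0$ factor vanishes. Extracting the $j = 0$ and $j = k+1$ terms of this identity shows that the remaining middle sum equals $-1 + (-1)^k q^{k(k+1)/2}$, whence $\alpha_x = (-1)^k q^{k(k+1)/2}$. In particular, all $k$-dimensional subspaces receive the same coefficient, as required. This step is the main calculation, but is routine once Cauchy's identity is identified as the right tool.

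For part (3), combining parts (1) and (2) gives
\[
\|\coeffv\|_1 = \sum_{k=0}^{d} q^{k(k+1)/2} \binom{d+1}{k+1}_q,
\]
and the leading term of the $k$th summand is $q^{k(k+1)/2 + (k+1)(d-k)} = q^{(k+1)(2d-k)/2}$. A short calculation (the exponent is a downward parabola in $k$ with stationary point $k = d - 1/2$) shows the exponent is maximized at $k = d - 1$ and $k = d$ with common value $d(d+1)/2$; the $k = d$ term alone contributes $q^{d(d+1)/2}$ (the full space $P$ being the unique $d$-dimensional subspace), and no other term exceeds this order. This yields $\|\coeffv\|_1 = \Theta(q^{d(d+1)/2})$. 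I do not expect any real obstacle; the only subtle point is recognizing the correct specialization of the Cauchy $q$-binomial identity in part (2).
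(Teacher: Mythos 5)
Your proof is correct and follows essentially the same route as the paper's: reduce to counting subspaces via Lemma~\ref{l:q_binom}(1) for part~(1), run the induction using the recursion of Remark~\ref{r:recursive} and the Cauchy $q$-binomial identity at $t=-1$ for part~(2), and compare leading-term exponents for part~(3). The only differences are cosmetic bookkeeping in how you split off the $j=0$ and $j=k+1$ terms of the Cauchy identity.
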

\begin{proof}
  Concerning statement~(1), Lemma~\ref{l:q_binom}(1) implies that 
$$
|\V(\F)| = |L \setminus\{0\}| = \sum\limits_{k=0}^d \binom{d+1}{k+1}_q,
$$
which is a polynomial in $q$. Since we know that the leading term of
$\binom{d+1}{k+1}_q$ is $q^{(k+1)((d+1)-(k+1))}$, we deduce that the middle
$q$-binomial coefficient(s) has/have the leading term of the highest power. That is, 
the leading term of the polynomial above equals
$q^{\left\lfloor{(d+1)^2}/{4}\right\rfloor}$ or
$2q^{\left\lfloor{(d+1)^2}/{4}\right\rfloor}$ (depending on the parity of $d$) as we need.

We prove statement (2) by induction. The statement clearly holds for
$k=0$. Suppose that it is valid for all $i<k$.  Using
Lemma~\ref{l:q_binom}(1) again, we see that every subspace of
dimension $k$ has $\binom{k+1}{i+1}_q$ subspaces of dimension $i$.
Therefore, using the recursive formula from Remark~\ref{r:recursive},
the coefficient of this subspace has to be
 \[1 - \sum_{i=0}^{k-1} (-1)^iq^{\frac{i(i+1)}{2}}\binom{k+1}{i+1}_q=
 \sum_{j=0}^{k} (-1)^j q^{\frac{j(j-1)}{2}}\binom{k+1}{j}_q.\]
 However, using the Cauchy binomial theorem for the second equality
 below, this sum equals
 \begin{align*}
   \sum_{j=0}^{k} (-1)^j
   q^{\frac{j(j-1)}{2}}\binom{k+1}{j}_q & =  
   (-1)^kq^{\frac{k(k+1)}{2}} + \sum_{j=0}^{k+1} (-1)^j
   q^{\frac{j(j-1)}{2}}\binom{k+1}{j}_q \\
   & = (-1)^kq^{\frac{k(k+1)}{2}} 
     + \prod_{j=0}^k(1 - q^j) \\
     & =  (-1)^kq^{\frac{k(k+1)}{2}} + 0,
 \end{align*} 
 which concludes the induction.

 It remains to prove statement~(3). Using statement~(2), we deduce
 that the $\ell_1$-norm of the resulting formula equals
$$
\sum\limits_{k=0}^d q^{\frac{k(k+1)}2} \binom{d+1}{k+1}_q.
$$
The leading term of this polynomial (in $q$) is
$2q^{\frac{d(d+1)}{2}}$. Indeed, the leading term of
$q^{\frac{k(k+1)}2} \binom{d+1}{k+1}_q$ equals $q^{\frac{k(k+1)}2 +
  (k+1)(d-k)}$ and is greatest for $k=d$ and $k=d-1$.
\end{proof}


\begin{proof}[Proof of Theorem \ref{t:lwb}]
  Fix $\varepsilon>0$ and let $d>2/\varepsilon$ be some integer, chosen to be odd for simplicity. 
  Recall that the above analysis holds for any $q$ that is a prime
  power, so $q$ can be chosen arbitrarily large. The set system
  $\F(L)$ consists of $n = [d+1]_q = \Theta(q^d)$ sets. The Venn
  diagram $\V(\F(L))$ has size $m =
  \Theta\left(q^{\frac{(d+1)^2}{4}}\right)$ and the $\ell_1$ norm of
  the formula supported by the Venn diagram is
  \[\Theta\left(q^{\frac{d(d+1)}{2}}\right) =
  \Theta\left(m^{\frac{4}{(d+1)^2}\cdot \frac{d(d+1)}{2}}\right) =
  \Theta\left(m^{2-\frac{2}{d+1}}\right) \geq
  \Omega\left(m^{2-\varepsilon}\right).\]
  Lemma~\ref{lem:lattice} ensures that this formula 
  minimizes the $\ell_1$ norm.
\end{proof}

\fi

\bibliographystyle{alpha}
\ifabstr
{\footnotesize\bibliography{ief}}
\else 
\bibliography{ief}

\newcommand{\etalchar}[1]{$^{#1}$}
\begin{thebibliography}{vRNvD09}

\bibitem[AE07]{AttaliEdelsbrunner}
D.~Attali and H.~Edelsbrunner.
\newblock Inclusion-exclusion formulas from independent complexes.
\newblock {\em Discrete Comput. Geom.}, 37(1):59--77, 2007.

\bibitem[BHK09]{set-partitioning}
A.~Bj{\"o}rklund, T.~Husfeldt, and M.~Koivisto.
\newblock Set partitioning via inclusion-exclusion.
\newblock {\em SIAM J. Comput.}, 39:546--563, 2009.

\bibitem[BHKK08]{travelling-salesman}
A.~Bj{\"o}rklund, T.~Husfeldt, P.~Kaski, and M.~Koivisto.
\newblock The travelling salesman problem in bounded degree graphs.
\newblock In {\em Automata, languages and programming. {P}art {I}}, volume 5125
  of {\em Lecture Notes in Comput. Sci.}, pages 198--209. Springer, Berlin,
  2008.

\bibitem[Bon36]{Bonferroni}
C.~E. Bonferroni.
\newblock Teoria statistica delle classi e calcolo delle probabilit{\`a}.
\newblock {\em Pubbl. d. R. Ist. Super. di Sci. Econom. e Commerciali di
  Firenze}, 8:1--62, 1936.

\bibitem[Coh04]{Cohn04}
Henry Cohn.
\newblock Projective geometry over {$\Bbb F_1$} and the {G}aussian binomial
  coefficients.
\newblock {\em Amer. Math. Monthly}, 111(6):487--495, 2004.

\bibitem[Doh03]{Dohmen}
K.~Dohmen.
\newblock {\em Improved {B}onferroni inequalities via abstract tubes}, volume
  1826 of {\em Lecture Notes in Mathematics}.
\newblock Springer-Verlag, Berlin, 2003.

\bibitem[ER97]{EdelsbrunnerRamos}
H.~Edelsbrunner and E.~A. Ramos.
\newblock Inclusion-exclusion complexes for pseudodisk collections.
\newblock {\em Discrete Comput. Geom.}, 17:287--306, 1997.

\bibitem[Gal96]{Galambos}
J.~Galambos.
\newblock {\em Bonferroni-Type Inequalities with Applications}.
\newblock Springer, 1996.

\bibitem[{H}at01]{hatcher01}
{A}. {H}atcher.
\newblock {\em {A}lgebraic Topology}.
\newblock {C}ambridge {U}niversity {P}ress, {C}ambridge, 2001.

\bibitem[HOR{\etalchar{+}}12]{gwop-team}
M.~Hoffmann, Y.~Okamoto, A.~R{uiz-Vargas}, D.~Scheder, and J.~Solymosi.
\newblock Solution to {GWOP} problem 17 {`A Regional Oracle'}.
\newblock Oral presentation, Tenth Gremo Workshop on Open Problems, Berg{\"u}n
  (GR), Switzerland, 2012.

\bibitem[KLS96]{KahnLinialSamorodnitsky}
J.~Kahn, N.~Linial, and A.~Samorodnitsky.
\newblock Inclusion-exclusion: exact and approximate.
\newblock {\em Combinatorica}, 16(4):465--477, 1996.

\bibitem[Knu97]{aocp}
D.~E. Knuth.
\newblock {\em The Art of Computer Programming, Vol. 2}.
\newblock Addison-Wesley, 1997.

\bibitem[Kra78]{Kratky}
K.~W. Kratky.
\newblock The area of intersection of {$n$} equal circular disks.
\newblock {\em J. Phys. A}, 11(6):1017--1024, 1978.

\bibitem[LN90]{LinialNisan}
N.~Linial and N.~Nisan.
\newblock Approximate inclusion-exclusion.
\newblock {\em Combinatorica}, 10(4):349--365, 1990.

\bibitem[Mat03]{matousek03}
J.~Matou{\v{s}}ek.
\newblock {\em Using the {B}orsuk-{U}lam theorem}.
\newblock Universitext. Springer-Verlag, Berlin, 2003.

\bibitem[{M}un84]{munkres84}
{J}.~{R}. {M}unkres.
\newblock {\em {E}lements of Algebraic Topology}.
\newblock {A}ddison - {W}esley, 1984.

\bibitem[NvR10]{partial-dominating}
J.~Nederlof and J.~M.~M. van Rooij.
\newblock Inclusion/exclusion branching for partial dominating set and set
  splitting.
\newblock In {\em Parameterized and exact computation}, volume 6478 of {\em
  Lecture Notes in Comput. Sci.}, pages 204--215. Springer, Berlin, 2010.

\bibitem[NW92]{NaimanWynn92}
D.~Q. Naiman and H.~P. Wynn.
\newblock Inclusion-exclusion-{B}onferroni identities and inequalities for
  discrete tube-like problems via {E}uler characteristics.
\newblock {\em Ann. Statist.}, 20(1):43--76, 1992.

\bibitem[NW97]{NaimanWynn97}
D.~Q. Naiman and H.~P. Wynn.
\newblock Abstract tubes, improved inclusion-exclusion identities and
  inequalities and importance sampling.
\newblock {\em Ann. Statist.}, 25(5):1954--1983, 1997.

\bibitem[PA71]{PolyaAlexanderson71}
G.~P{\'o}lya and G.~L. Alexanderson.
\newblock Gaussian binomial coefficients.
\newblock {\em Elem. Math.}, 26:102--109, 1971.

\bibitem[PCG{\etalchar{+}}92]{Mseed}
G.~Perrot, B.~Cheng, K.D. Gibson, J.~Vila, K.A. Palmer, A.~Nayeem, B.~Maigret,
  and H.A. Scheraga.
\newblock {MSEED}: A program for the rapid analytical determination of
  accessible surface areas and their derivatives.
\newblock {\em Journal of Computational Chemistry}, 13(1):1--11, 1992.

\bibitem[Sta97]{Stanley}
Richard~P. Stanley.
\newblock {\em Enumerative combinatorics. {V}ol. 1}, volume~49 of {\em
  Cambridge Studies in Advanced Mathematics}.
\newblock Cambridge University Press, Cambridge, 1997.
\newblock With a foreword by Gian-Carlo Rota, corrected reprint of the 1986
  original.

\bibitem[vRNvD09]{dominating-sets}
J.~M.~M. van Rooij, J.~Nederlof, and T.~C. van Dijk.
\newblock Inclusion/exclusion meets measure and conquer: exact algorithms for
  counting dominating sets.
\newblock In {\em Algorithms---{ESA} 2009}, volume 5757 of {\em Lecture Notes
  in Comput. Sci.}, pages 554--565. Springer, Berlin, 2009.

\bibitem[YGZ{\etalchar{+}}10]{yang-al}
A.~Y. Yang, A.~Ganesh, Z.~Zhou, S.~S. Sastry, and Y.~Ma.
\newblock A review of fast $\ell_1$ -minimization algorithms for robust face
  recognition.
\newblock Preprint, arXiv:1007.3753, 2010.

\end{thebibliography}
\fi
\ifabstr

\fi

\end{document}
